\pgfplotsset{compat=1.14}
\theoremstyle{dgthm}
\theoremstyle{dgdef}
\newtheorem{theorem}{Theorem}
\newtheorem{corollary}{Corollary}
\newtheorem{lemma}{Lemma}
\newtheorem{remark}{Remark}
\newcommand*{\llbrace}{\{\mskip-6mu\{}
\newcommand*{\rrbrace}{\}\mskip-6mu\}}
\newcommand{\dz}{\partial_z} 
\newcommand{\jump}[1]{\left\llbracket #1 \right\rrbracket}
\newcommand{\avg}[1]{\left\{\!\!\!\left\{ #1 \right\}\!\!\!\right\}}
\newcommand{\flux}[1]{\frac{\mu}{\sigma_t} \dz #1}
\newcommand{\fluxh}[2][]{\frac{\mu}{\sigma_t^{#1}} \dz^h {#2}_{#1}}
\newcommand{\fluxhs}[2][]{\frac{\mu}{\sqrt{\sigma_t^{#1}}} \dz^h {#2}_{#1}}
\newcommand{\Fv}{\mathcal{F}_h^v} 
\newcommand{\boundary}[2]{\ensuremath{\langle {#1},{#2}\rangle}} 
\newcommand{\T}{\mathcal{T}}
\newcommand{\had}{h^{\prime}}
\newcommand{\aad}{a_{\had}}
\newcommand{\uad}{u_{\T^{\p}, \kad}}
\newcommand{\Vad}{V_{\T^{\p}, k}}
\newcommand{\kad}{k^{\prime}}
\newcommand{\norm}[1]{\left\Vert #1 \right\Vert}
\newcommand{\normc}{\left\Vert \cdot \right\Vert}
\newcommand{\p}{\prime}
\newcommand{\ip}[2]{\left( #1, #2\right)}
\newcommand{\kz}{\ensuremath{k_z}}
\newcommand{\kmu}{\ensuremath{k_\mu}}
\begin{document}

\title{A phase-space discontinuous Galerkin scheme for the radiative transfer equation in slab geometry
}
\runningtitle{Phase-space DG for RTE}

\abstract{
    We derive and analyze a symmetric interior penalty discontinuous Galerkin scheme for the approximation of the second-order form of the radiative transfer equation in slab geometry. Using appropriate trace lemmas, the analysis can be carried out as for more standard elliptic problems.
	Supporting examples show the accuracy and stability of the method also numerically, for different polynomial degrees.
	For discretization, we employ quad-tree grids, which allow for local refinement in phase-space, and we show exemplary that adaptive methods can efficiently approximate discontinuous solutions. We investigate the behavior of hierarchical error estimators and error estimators based on local averaging.
}

\author[1]{Riccardo Bardin}
\author[1]{Fleurianne Bertrand}
\author[1]{Olena Palii} 
\author[1]{Matthias Schlottbom} 
\runningauthor{Bardin, Bertrand, Palii and Schlottbom}
\affil[1]{\protect\raggedright 
Faculty of Electrical Engineering, Mathematics and Computer Science\\
University of Twente\\
 Enschede, The Netherlands\\
email: m.schlottbom@utwente.nl}

\keywords{
radiative transfer, discontinuous Galerkin, slab geometry, phase-space
}

\maketitle
\section{Introduction}
We consider the numerical solution of the radiative transfer equation in slab geometry, which has several applications such as atmospheric science \cite{Hansen_1974}, oceanography \cite{Arnush_1972}, pharmaceutical powders \cite{Burger:97} or solid state lighting \cite{Melikov_2018}; see also \cite{Carminati_2021} for a recent introduction.

The radiative transfer equation in slab geometry describes the equilibrium distribution of specific intensity $\phi$ in a three-dimensional background medium $\mathbb{R}^2\times (0,L)$ with coordinates $(x,y,z)$ and $L>0$ denoting the thickness of the slab.
The modelled physical principles are propagation, absorption and scattering by the background medium.
The basic assumptions that allow to reduce model complexity are that the scattering and absorption cross sections $\sigma_s$ and $\sigma_a$ are functions of $z$ only, see, e.g., \cite[p. 9]{Agoshkov98}. Moreover, it is assumed that internal sources $\tilde f$ depend only on $z$ and on $\mu\coloneqq s\cdot n_z$, with unit vectors $s\in\mathbb{S}^2$ and $n_z=(0,0,1)^T$. As a consequence, see, e.g,. \cite[p. 9]{Agoshkov98}, the specific intensity $\phi$ is a function of $z$ and $\mu$ only. Assuming, that the distribution of a new direction after a scattering event is distributed uniformly and does not depend on the pre-scattered direction, the stationary radiative transfer equation for the specific intensity with inflow boundary conditions is given by \cite[(1.12)]{Agoshkov98}
\begin{align}\label{eq:rte1}
 \mu \dz \phi(z,\mu)+\sigma_t(z)\phi(z,\mu)&=\frac{\sigma_s(z)}{2} \int_{-1}^1 \phi(z,\mu')\,d\mu' + \tilde f(z,\mu)\quad\text{for } 0<z<L,\ -1<\mu<1,\\
 \phi(0,\mu) &= \tilde g_0(\mu)\quad\text{for } \mu>0, \label{eq:rte2}\\
 \phi(L,\mu) &= \tilde g_L(\mu)\quad\text{for } \mu<0.\label{eq:rte3}
\end{align}
Here, $\sigma_t\coloneqq \sigma_s+\sigma_a$ is called the total cross section and $1/\sigma_t$ describes the mean free path between interactions with the background medium. Moreover, $\tilde g_0$ and $\tilde g_L$ model boundary sources. 
Writing $\phi=\phi^+ + \phi^-$ as a sum of even and odd functions in $\mu$, which are defined by $\phi^\pm(z,\mu)\coloneqq (\phi(z,\mu)\pm \phi(z,-\mu))/2$, a projection of \cref{eq:rte1} onto even and odd functions yields the system, see, e.g., \cite{EggerSchlottbom12},
\begin{align}
 \mu \dz \phi^-(z,\mu)+\sigma_t(z)\phi^+(z,\mu)&=\sigma_s(z)\int_{0}^1 \phi^+(z,\mu')\,d\mu' +\tilde f^+(z,\mu),
 \label{eq:even}\\
 \mu \dz \phi^+(z,\mu)+\sigma_t(z)\phi^-(z,\mu)&=\tilde f^-(z,\mu). 
 \label{eq:odd}
\end{align}
Assuming a strictly positive total cross section $\sigma_t>0$, which is a common assumption in the mentioned applications, we can rewrite \cref{eq:odd} to
\begin{align}\label{eq:odd2}
\phi^-(z,\mu) = \frac{1}{\sigma_t} \left(\tilde f^-(z,\mu)- \mu\dz \phi^+(z,\mu)\right).
\end{align}
Using \cref{eq:odd2} in \cref{eq:even} and in \cref{eq:rte2}--\cref{eq:rte3}, and writing $u(z,\mu)\coloneqq \phi^+(z,\mu)$ for the even part, we obtain the following equivalent second-order form of the radiative transfer equation \cite[(3.76)]{Agoshkov98}, see also \cite{BalMaday2002,EggerSchlottbom12,palii2020convergent},
\begin{alignat}{5}
    -\dz \left(\frac{\mu^2}{\sigma_t}\dz u\right) + \sigma_t u &=\sigma_s \int_0^1 u(\cdot,\mu')\,d\mu' + f &\quad\text{in } \Omega,\label{eq:ep1}\\
     u + \frac{\mu}{\sigma_t} \partial_n u &= g &\quad\text{on }\Gamma.\label{eq:ep2}
\end{alignat}
Here, $\Omega\coloneqq(0,L)\times(0,1)$ and $g(0,\mu)\coloneqq\tilde g(\mu)-\sigma_t^{-1}(0)\tilde f^-(0,\mu)$ and $g(L,\mu)\coloneqq\tilde g_L(\mu)+ \sigma_t^{-1}(L)\tilde f^-(L,\mu)$ for $\mu>0$.
Moreover, $f(z,\mu)\coloneqq \tilde f^+(z,\mu)-\sigma_t^{-1}(z)\mu\dz \tilde f(z,\mu)$.
Furthermore, $\partial_n u(0,\mu)\coloneqq -\dz u(0,\mu)$ and $\partial_n u(L,\mu)\coloneqq\dz u(L,\mu)$ are the normal derivatives of $u$ on the boundary of the slab, defined as $\Gamma\coloneqq \Gamma_0\cup \Gamma_L$, where $\Gamma_z\coloneqq \{z\}\times(0,1)$.
Once $u$ has been determined, the odd part of the specific intensity can be recovered from \cref{eq:odd2}.

Due to the product structure of $\Omega$, it seems natural to use separate discretization techniques for the spatial variable $z$ and the angular variable $\mu$. This is for instance done in the spherical harmonics method, in which a truncated Legendre polynomial expansion is employed to discretize $\mu$ \cite{DuderstadtMartin79}. The resulting coupled system of Legendre moments, which still depend on $z$, is then discretized for instance by finite differences or finite elements \cite{DuderstadtMartin79}.
Another class of approximations consists of discrete ordinates methods which perform a collocation in $\mu$ and the integral in \cref{eq:ep1} is approximated by a quadrature rule \cite{DuderstadtMartin79}. The resulting system of transport equations is then discretized for instance by finite differences \cite{DuderstadtMartin79} or discontinuous Galerkin methods \cite{Han2010,guermond2014discontinuous}, and also spatially adaptive schemes have been used \cite{ragusa2010two}.

A major drawback of the independent discretization of the two variables $z$ and $\mu$ is that a local refinement in phase-space is not possible.
Such local refinement is generally necessary to achieve optimal schemes.
For instance, the solution can be non-smooth in the two points $(z,\mu)=(0,0)$ and $(z,\mu)=(L,0)$, which are exactly the two points separating the inflow from the outflow boundary. Although certain tensor-product grids can resolve this geometric singularity for the slab geometry, such as double Legendre expansions \cite{DuderstadtMartin79}, they fail to do so for generic multi-dimensional situations.
Moreover, local singularities of the solution due to singularities of the optical parameters or the source terms can in general not be resolved with optimal complexity. 

Phase-space discretizations have been used successfully for radiative transfer in several applications, see, e.g., \cite{deAlmeida2017iterative,liu2005finite,Martin_Duderstadt_1977,martin1981phase} for slab geometry, \cite{kitzmann2016discontinuous} for geometries with spherical symmetries, or \cite{favennec_mathew_badri2019ad,kophazi2015space} for more general geometries. Let us also refer to \cite{kitzler2015high} for a phase-space discontinuous Galerkin method for the nonlinear Boltzmann equation.
A non-tensor product discretization that combines ideas of discrete ordinates to discretize the angular variable with a discontinuous Petrov-Galerkin method to discretize the spatial variable has been developed in \cite{dahmen2020adaptive}.
 
In this work, we aim to develop a numerical method for \eqref{eq:ep1}--\eqref{eq:ep2} that allows for local mesh refinement in phase-space and that allows for a relatively simple analysis and implementation. To accomplish this, we base our discretization on a partition of $\Omega$ such that each element in that partition is the Cartesian product of two intervals. Local approximations are then constructed from products of polynomials defined on the respective intervals.
In order to easily handle hanging nodes, which such partitions generally contain, we use globally discontinuous approximations. 
In case the resulting linear systems are very large, iterative solution techniques with small additional memory requirements may be employed for their numerical solution, such as the conjugate gradient method, which, however, requires the linear system to be symmetric positive definite.
Therefore, we employ a symmetric interior penalty discontinuous Galerkin formulation.
Besides the proper treatment of traces, which requires the inclusion of a weight function in our case, the analysis of the overall scheme is along the standard steps for the analysis of discontinuous Galerkin methods \cite{diPietroErn}. As a result, we obtain a scheme that enjoys an abstract quasi-best approximation property in a mesh-dependent energy norm.
Our choice of meshes also allows to explicitly estimate the constants in auxiliary tools, such as inverse estimates and discrete trace inequalities. 
As a result, we can give an explicit lower bound on the penalty parameter required for discrete stability. 
This lower bound for the penalty parameter depends only on the polynomial degree for the approximation in the $z$-variable and is relatively simple to compute; see \cite{EPSHTEYN2007} for the estimation of the penalty parameter in the context of standard elliptic problems.
Our theoretical results about accuracy and stability of the method are confirmed by numerical examples, which show optimal convergence rates for different polynomial degrees assuming sufficient regularity of the solution. Moreover, we show that adaptively refined grids are able to efficiently construct approximations to non-smooth solutions.

For the local adaptation of the grid we investigate several error estimators. First, we consider two hierarchical error estimators, which either use polynomials of higher degree or the discrete solution on a uniformly refined mesh, respectively. Such estimators have been investigated in the elliptic context, e.g., in \cite{BankSmith1993,Karakashian_2003}. Our numerical results show that these error indicators can be used to refine the mesh towards the singularity of the solution. A drawback of these estimators is that an additional global problem has to be solved in every step. 
Since the solutions to \eqref{eq:ep1}--\eqref{eq:ep2} can be discontinuous in $\mu$, the proofs developed for elliptic equations to show that the global estimator is equivalent to a locally computable quantity, see, e.g., \cite{Karakashian_2003}, do not apply.
To overcome the computational complexity of building estimators that require to solve a global problem, we propose an \textit{a posteriori} estimator based on a local averaging procedure. This cheap estimator shows a similar performance compared to the more expensive hierarchical ones mentioned before.

The outline of the rest of the manuscript is as follows. In \Cref{sec:Sec_2_ev_par_slab} we introduce notation and collect technical tools, such as trace theorems.
In \Cref{sec:Sec3_DG_derivation_analysis} we derive and analyze the discontinuous Galerkin scheme. \Cref{sec:Sec4_numerical_examples_PS_DG} presents numerical examples confirming the theoretical results of \Cref{sec:Sec3_DG_derivation_analysis}.
\Cref{sec:aposteriori} shows that our scheme works well with adaptively refined grids. We introduce here two hierarchical error estimators and one based on local post-processing. The paper closes with some conclusions in \Cref{sec:conclusion}.

\section{Preliminaries}\label{sec:Sec_2_ev_par_slab}
We denote by $L^2(\Omega)$ the usual Hilbert space of square integrable functions and denote the corresponding inner product by
\begin{align*}
    \ip{u}{v} \coloneqq \int_\Omega u(z,\mu) v(z,\mu) \,dz\,d\mu.
\end{align*}
Furthermore, we introduce the Hilbert space
\begin{align*}
	V\coloneqq\{v\in L^2(\Omega):\ \mu \dz v\in L^2(\Omega)\},
\end{align*}
which consists of square integrable functions for which the weighted derivative is also square integrable; see \cite[Section~2.2]{Agoshkov98}. We endow the space $V$ with the graph norm
\begin{align*}
	\|v\|_V^2 \coloneqq \|v\|_{L^2(\Omega)}^2 + \|\mu \dz v \|_{L^2{(\Omega)}}^2,\qquad v\in V.
\end{align*}
To treat the boundary condition \cref{eq:ep2}, let us introduce the following inner product
\begin{align*}
\boundary{u}{v} \coloneqq \int_\Gamma u v \,\mu\,d\mu \coloneqq \int_0^1 \big( u(L,\mu)v(L,\mu)+u(0,\mu)v(0,\mu)\big)\mu\, d\mu,
\end{align*}
and the corresponding space $L^2(\Gamma;\mu)$ of all measurable functions $v$ such that 
\begin{align*}
	\|v\|_{L^2(\Gamma;\mu)}^2 \coloneqq \boundary{v}{v}<\infty.
\end{align*}
According to \cite[Theorem~2.8]{Agoshkov98} and its proof, functions in $V$ have a trace on $\Gamma$ and
\begin{align}\label{eq:trace}
	\|v\|_{L^2(\Gamma;\mu)} \leq \frac{2}{\sqrt{1-\exp(-2L)}} \|v\|_{V},
\end{align}
and the trace operator mapping $V$ to $L^2(\Gamma;\mu)$ is surjective \cite[Theorem~2.9]{Agoshkov98}.
For the analysis of the numerical scheme, we provide a slightly different trace lemma.
\begin{lemma}\label{lem:trace_refined}
  Let $K=(z^l,z^r)\times(\mu^b,\mu^t)\subset \Omega$ for $0\leq z^l<z^r\leq L$ and $0\leq \mu^b<\mu^t\leq 1$. Let $F=\{z_F\}\times(\mu^b,\mu^t)$ with $z_F\in \{z^l,z^r\}$ be a vertical face of $K$. Then, for every $v\in V$ it holds that
 \begin{align*}
	\int_F |v|^2 \mu\,d\mu \leq \left(\frac{\mu^t}{z^r-z^l}\|v\|_{L^2(K)}+2\|\mu\dz v\|_{L^2(K)}\right) \|v\|_{L^2(K)}.
 \end{align*}
\end{lemma}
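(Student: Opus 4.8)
The plan is to reduce the two-dimensional weighted trace estimate to a family of one-dimensional trace identities, one for each fixed $\mu$, and then integrate the weight $\mu$ back in. Fix the right face, $z_F=z^r$ (the case $z_F=z^l$ is symmetric, obtained by replacing the weight below with its mirror image). For fixed $\mu\in(\mu^b,\mu^t)$ I write $w(z)\coloneqq v(z,\mu)$ and introduce the affine weight $\psi(z)\coloneqq (z-z^l)/(z^r-z^l)$, so that $\psi(z^l)=0$, $\psi(z^r)=1$ and $\psi'\equiv 1/(z^r-z^l)$. Applying the fundamental theorem of calculus to $\psi w^2$ gives the identity
\[
 w(z^r)^2=\int_{z^l}^{z^r}\bigl(\psi'\,w^2+2\psi\,w\,w'\bigr)\,dz,
\]
and bounding $0\le\psi\le 1$ yields the pointwise-in-$\mu$ estimate
\[
 v(z^r,\mu)^2\le \frac{1}{z^r-z^l}\int_{z^l}^{z^r} v(z,\mu)^2\,dz+2\int_{z^l}^{z^r}|v(z,\mu)|\,|\dz v(z,\mu)|\,dz.
\]

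Next I would multiply this inequality by $\mu\ge 0$ and integrate over $\mu\in(\mu^b,\mu^t)$, which reconstructs $\int_F |v|^2\,\mu\,d\mu$ on the left. For the first term on the right I would use $\mu\le\mu^t$ to pull out the factor $\mu^t/(z^r-z^l)$ and recognize the remaining double integral as $\norm{v}_{L^2(K)}^2$. For the second term, the crucial observation is that $\mu\ge 0$ lets me absorb the weight into the derivative, $\mu\,|\dz v|=|\mu\,\dz v|$, so that after the Cauchy--Schwarz inequality on $K$ this term is bounded by $2\,\norm{v}_{L^2(K)}\,\norm{\mu\,\dz v}_{L^2(K)}$. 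Collecting the two contributions and factoring out $\norm{v}_{L^2(K)}$ then gives exactly the claimed inequality.

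The one remaining point to justify---and the only genuine subtlety---is that the fundamental-theorem-of-calculus step is licit for $v\in V$. Since $\mu\,\dz v\in L^2(\Omega)$, Fubini's theorem shows that for almost every $\mu\in(\mu^b,\mu^t)$ the slice $(\mu\,\dz v)(\cdot,\mu)$ lies in $L^2(z^l,z^r)$; for such $\mu>0$ we have $\dz v(\cdot,\mu)=\mu^{-1}(\mu\,\dz v)(\cdot,\mu)\in L^2(z^l,z^r)$, so $v(\cdot,\mu)\in H^1(z^l,z^r)$ is absolutely continuous and both the trace $v(z_F,\mu)$ and the identity above are well defined. The degeneration of the weight at $\mu=0$ is harmless, since $\{\mu=0\}$ is null in the $\mu$-integration and the right-hand side controls precisely the weighted derivative $\mu\,\dz v$. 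Thus the essential content is the elementary one-dimensional trace identity, and the main work is simply the bookkeeping of the weight $\mu$, which threads through cleanly exactly because it is nonnegative.
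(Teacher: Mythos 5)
Your proof is correct and follows essentially the same route as the paper: the paper applies the fundamental theorem of calculus to $w=v^2$ and averages the resulting identity over $z\in(z^l,z^r)$ before multiplying by $\mu$ and using Cauchy--Schwarz, which yields exactly the same bound as your weighted identity with the cutoff $\psi$. Your added justification of the FTC step for slices of $v\in V$ is a welcome (and correct) elaboration of a point the paper leaves implicit.
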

\begin{proof}
	Without loss of generality, we assume that $z^l=z_F=0$ and $z^r=h_z$.
	From the fundamental theorem of calculus, we obtain that
	\begin{align*}
		w(0,\mu)= w(z,\mu) - \int_0^z \dz w(y,\mu)\,d y.
	\end{align*}
	Multiplication by $\mu$, integration over $K$ and an application of the triangle inequality yields that
	\begin{align*}
		h_z\int_F |w|\mu\, d\mu \leq \int_K |w|\mu \,dz\,d\mu + \int_K \int_0^z \mu|\dz w(y,\mu)|\,dy\,dz\,d\mu.
	\end{align*}
	Setting $w=v^2$ in the previous inequality, observing that $|\mu\dz w|\leq 2|(\mu \dz v)v|$ and applying the Cauchy-Schwarz inequality shows that
	\begin{align*}
		\int_F |v|^2\mu \,d\mu \leq \int_K |v|^2\frac{\mu}{h_z} \,dz\,d\mu + 2\|\mu\dz v\|_{L^2(K)} \|v\|_{L^2(K)},
	\end{align*}
	which concludes the proof.
\end{proof}

\subsection{Weak formulation and solvability}
\label{sub: Weak formulation and solvability}
Performing the usual integration-by-parts, see, e.g., \cite{BalMaday2002,palii2020convergent}, the weak formulation of \eqref{eq:ep1}--\eqref{eq:ep2} is as follows: find $u\in V$ such that
\begin{align}\label{eq:ep_weak}
	a^e(u,v)=\ip{f}{v}+\boundary{g}{v}\quad\forall v\in V,
\end{align}
with bilinear form $a^e:V \times V \to \mathbb{R}$,
\begin{align}\label{eq:ep_bifo}
	a^e(u,v) \coloneqq \left( \frac{1}{\sigma_t} \mu\dz u, \mu \dz v \right) + (\sigma_t u,v)-(\sigma_s Pu,v) + \boundary{u}{v}.
\end{align}
Here, for ease of notation, we use the scattering operator $P:L^2(\Omega)\to L^2(\Omega)$,
\begin{align*}
	(Pu)(z,\mu) \coloneqq\int_0^1 u(z,\mu')\,d\mu'.
\end{align*}
Using the Cauchy-Schwarz inequality, we deduce that $\|Pu\|_{L^2(\Omega)}\leq\|u\|_{L^2(\Omega)}$ for $u\in L^2(\Omega)$.
Assuming 
\begin{align}\label{eq:ass1}
0\leq \sigma_s,\sigma_t\in L^\infty(0,L), \quad\sigma_t-\sigma_s\geq c>0,
\end{align}
for some $c>0$, we therefore obtain that the bilinear form $a^e$ is $V$-elliptic, and, in view of the trace theorem, cf. \cref{eq:trace}, bounded.
Similarly, for $f\in L^2(\Omega)$ and $g\in L^2(\Gamma;\mu)$, the right-hand side in \cref{eq:ep_weak} defines a bounded linear functional on $V$. Hence, there exists a unique weak solution $u\in V$ of \cref{eq:ep_weak} by the Lax-Milgram lemma, see also \cite{BalMaday2002}, \cite[Theorem~3.3]{palii2020convergent} or \cite[Section~5.3]{EggerSchlottbom12} for similar well-posedness statements.

If $f\in L^2(\Omega)$, testing \cref{eq:ep_weak} with functions in $C^\infty_0(\Omega)$ shows that $\flux{u}$ has a weak $\mu\dz$-derivative in $L^2(\Omega)$ and \cref{eq:ep1} holds a.e. in $\Omega$. In particular, $\flux{u}\in V$ and $\flux{u}$ has a trace. For $v\in V$, an integration by parts in \cref{eq:ep_weak} then shows that
\begin{align*}
    \ip{f}{v}+\boundary{g}{v}=a^e(u,v) = \ip{f}{v}+\boundary{u}{v}+\boundary{\frac{\mu}{\sigma_t}\partial_n u}{v}.
\end{align*}
Since the trace operator is surjective from $V$ to $L^2(\Gamma;\mu)$ \cite[Theorem~2.9]{Agoshkov98}, it follows that \cref{eq:ep2} holds in $L^2(\Gamma;\mu)$.
We denote the space of  solutions with data $f\in L^2(\Omega)$ and $g\in L^2(\Gamma;\mu)$ by 
\begin{align}\label{eq:Vstar}
	V_* \coloneqq \left\{ u \in V: {\frac{\mu}{\sigma_t}\dz u}\in V \right\}.
\end{align}

\section{Discontinuous Galerkin scheme}\label{sec:Sec3_DG_derivation_analysis}
In the following we will derive the numerical scheme to approximate solutions to \cref{eq:ep_weak}. After introducing a suitable partition of $\Omega$ using quad-tree grids and corresponding broken polynomial spaces, we can essentially follow the standard procedure for elliptic problems, cf. \cite{diPietroErn}. One notable difference is that we need to incorporate the weight function $\mu$ on the faces.

\subsection{Mesh and broken polynomial spaces}
In order to simplify the presentation, and subsequently the implementation, we consider quad-tree meshes \cite{Frey2008} as follows.
Let $\mathcal{T}$ be a partition of $\Omega$ such that $\sigma_t$ is constant on each element $K\in\mathcal{T}$, and that
$$K=(z_{K}^l,z_{K}^r)\times (\mu_{K}^l,\mu_{K}^r)\quad\forall \, K\in\mathcal{T},$$
for illustration see Figure \ref{fig:quadmesh}.  We denote the local mesh size by $h_{K}=z_{K}^r-z_{K}^l$. 

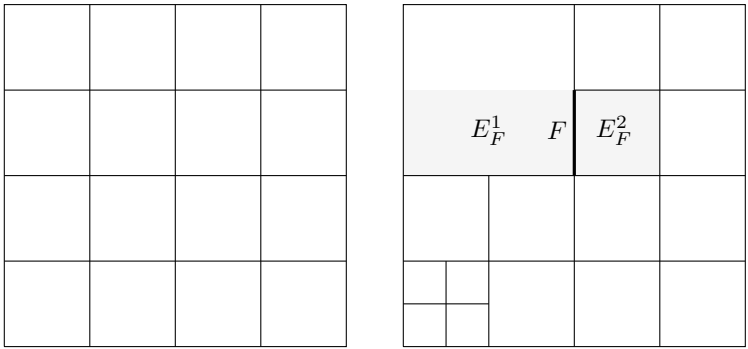
\begin{figure}[ht]
    \centering
    \begin{tikzpicture}[scale=0.75]
    \draw (0,0) -- (6,0) -- (6,6) -- (0,6) -- (0,0);
    \draw (0,1.5) -- (6,1.5);
    \draw (0,3) -- (6,3);
    \draw (0,4.5) -- (6,4.5);
    
    \draw (1.5,0) -- (1.5,6);
    \draw (3,0) -- (3,6);
    \draw (4.5,0) -- (4.5,6);

    \fill[black!5!white] (10,3) rectangle (11.5,4.5);
    \draw (10.7,3.8) node{$E^2_F$};
    \fill[black!5!white] (7,3) rectangle (10,4.5);
    \draw (8.5,3.8) node{$E^1_F$};
    \draw[very thick] (10,3) -- (10,4.5);
    \draw (9.7,3.8) node{$F$};
    
    \draw (7,0) -- (13,0) -- (13,6) -- (7,6) -- (7,0);
    \draw (7,3) -- (13,3);
    \draw (7,1.5) -- (13,1.5);
    \draw (10,4.5) -- (13,4.5);
    
    \draw (10,0) -- (10,6);
    \draw (8.5,0) -- (8.5,3);
    \draw (11.5,0) -- (11.5,6);
    
    \draw (7,0.75)--(8.5, 0.75);
    \draw (7.75,0)--(7.75, 1.5);
    \end{tikzpicture}
    \caption{Left: Uniform mesh with $16$ elements. Right: Non-uniform mesh with hanging nodes. Moreover, the two sub-elements $E^1_F$ and $E^2_F$ (shaded) for a vertical face $F\in{\Fv}^i$ (thick black line).}
    \label{fig:quadmesh}
\end{figure}

Next, let us introduce some standard notation.
Denote $\mathbb{P}_k$ the space of polynomials of one real variable of degree $k\ge 0$,
and let the broken polynomial space $V_h$ be denoted by
\begin{align}\label{eq:Vh}
	V_h \coloneqq \left\{v\in L^2(\Omega):\, v_{\mid K}\in \mathbb{P}_{\kz+1}\otimes\mathbb{P}_{\kmu}\,\,\forall K\in \mathcal{T}\right\},
\end{align}
with $\kz,\kmu\geq 0$.
Here, $\mathbb{P}_{\kz+1}\otimes\mathbb{P}_{\kmu}$ denotes the tensor product of $\mathbb{P}_{\kz+1}$ and $\mathbb{P}_{\kmu}$.
Moreover, let $V(h)\coloneqq V+V_h$.
By ${\Fv}^i$ we denote the set of interior vertical faces, that is for any $F\in{\Fv}^i$ there exist two disjoint elements 
\begin{align*}
	K_1=(z_1^l,z_1^r)\times(\mu^l_1,\mu^r_1) \text{ and } K_2=(z^l_2,z_2^r)\times(\mu^l_2,\mu^r_2)
\end{align*} 
such that $z_F=z_1^r=z^l_2$ and $F=\{z_F\}\times \big((\mu^l_1,\mu^r_1)\cap (\mu^l_2,\mu^r_2)\big)$.
For $F\in {\Fv}^i$ we define the jump and the average of $v\in V_h$ by
\[
\jump{v} \coloneqq v_{\mid K_1}(z_F,\mu)-v_{\mid K_2}(z_F,\mu),\qquad \avg{v} \coloneqq \frac{1}{2}\left(v_{\mid K_1}(z_F,\mu)+v_{\mid K_2}(z_F,\mu)\right).
\]
In order to take into account local variations in the mesh size and diffusion coefficient $1/\sigma_t$, we furthermore define the dimensionless quantity
\begin{align}\label{eq:dim_less}
    D_{F,\sigma}\coloneqq \left(\frac{1}{\sigma_{t\mid K_1}(z_F) h_{K_1}}+\frac{1}{\sigma_{t\mid K_2}(z_F) h_{K_2}}\right)^{-1}, 
\end{align}
where $h_{K_i}$, $i\in\{1,2\}$, denotes the local mesh size of the element $K_i$ in $z$-direction.
For an interior face $F\in {\Fv}^i$ with $F=\{z_F\}\times (\mu_F^b,\mu_F^t)$, which is shared by two elements $K_F^i\in \mathcal{T}$, $i=1,2$, as above, let us introduce the sub-elements
\begin{align}\label{eq:def_portion}
	E_F^i \coloneqq (z^l_i,z_i^r) \times  (\mu_F^b,\mu_F^t) \subset K_F^i.
\end{align}
We note that the inclusion in \cref{eq:def_portion} can be strict in the case of hanging nodes, see for instance \Cref{fig:quadmesh}. 

Combining \Cref{lem:trace_refined} with common inverse inequalities, cf. \cite[Sect.~4.5]{BrennerScott}, 
i.e., for any $k\geq 0$ there exists a constant $C_{ie}(k)$ such that
	\begin{align}\label{eq:inverse_inequality}
	\left(\int_{z^l}^{z^r} |v'|^2 dz\right)^{1/2} \leq \frac{\sqrt{C_{ie}(k)}}{z^r-z^l}\left(\int_{z^l}^{z^r} |v|^2 dz\right)^{1/2} \quad\forall v\in \mathbb{P}_k,
	\end{align}
we obtain the following discrete trace lemma.

\begin{lemma}[Discrete trace inequality]\label{lem:discrete_trace}
	Let $K=(z_{K}^l,z_{K}^r)\times (\mu_K^l,\mu_K^r)\in \mathcal{T}$ and let $F=\{z_F\}\times(\mu_F^b,\mu_F^t)\in \Fv$ be such that $F\subset\partial K$. Then, for any $k\geq 0$ there holds
	\begin{align*}
		\|v\|_{L^2(F;\mu)}^2 \leq \frac{C_{dt}(k)}{h_{K}} \|v\|_{L^2((z_{K}^l,z_{K}^r)\times (\mu_F^b,\mu_F^t))}^2 \quad\forall v\in \mathbb{P}_k,
	\end{align*}
	where $C_{dt}(k)=1+2\sqrt{C_{ie}(k)}$, and $C_{ie}(k)$ is the constant in \cref{eq:inverse_inequality}.
\end{lemma}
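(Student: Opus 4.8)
The plan is to reduce the claim to the refined trace inequality of \Cref{lem:trace_refined}, applied not to the full element $K$ but to the thin strip
\begin{align*}
K'\coloneqq(z_{K}^l,z_{K}^r)\times(\mu_F^b,\mu_F^t),
\end{align*}
which shares the $z$-extent of $K$ but carries only the angular range of the face $F$. Since $F\subset\partial K$ forces $z_F\in\{z_{K}^l,z_{K}^r\}$ and $(\mu_F^b,\mu_F^t)\subseteq(\mu_{K}^l,\mu_{K}^r)$, the face $F=\{z_F\}\times(\mu_F^b,\mu_F^t)$ is exactly a vertical face of $K'$, and any $v\in\mathbb{P}_k$ is smooth and hence lies in $V$ on $K'$. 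Thus \Cref{lem:trace_refined} applies with $\mu^t=\mu_F^t$ and $z^r-z^l=h_{K}$, giving
\begin{align*}
\|v\|_{L^2(F;\mu)}^2 \leq \left(\frac{\mu_F^t}{h_{K}}\|v\|_{L^2(K')}+2\|\mu\dz v\|_{L^2(K')}\right)\|v\|_{L^2(K')}.
\end{align*}

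Next I would dispose of the two coefficients multiplying $\|v\|_{L^2(K')}$ using $0<\mu\le 1$ and the inverse estimate \cref{eq:inverse_inequality}. For the first coefficient, $\mu_F^t\le 1$ immediately yields $\mu_F^t/h_{K}\le 1/h_{K}$. For the second, the key point is that \cref{eq:inverse_inequality} is one-dimensional, so I would apply it fiberwise: for each fixed $\mu$ the map $z\mapsto v(z,\mu)$ belongs to $\mathbb{P}_k$, whence
\begin{align*}
\int_{z_{K}^l}^{z_{K}^r}|\dz v(z,\mu)|^2\,dz \leq \frac{C_{ie}(k)}{h_{K}^2}\int_{z_{K}^l}^{z_{K}^r}|v(z,\mu)|^2\,dz.
\end{align*}
Multiplying by $\mu^2$, integrating over $\mu\in(\mu_F^b,\mu_F^t)$ and bounding $\mu^2\le 1$ on the right then gives $\|\mu\dz v\|_{L^2(K')}\le \sqrt{C_{ie}(k)}\,h_{K}^{-1}\|v\|_{L^2(K')}$.

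Substituting both bounds and factoring out $\|v\|_{L^2(K')}^2/h_{K}$ collapses the estimate to
\begin{align*}
\|v\|_{L^2(F;\mu)}^2 \leq \frac{1+2\sqrt{C_{ie}(k)}}{h_{K}}\|v\|_{L^2(K')}^2,
\end{align*}
which is the assertion with $C_{dt}(k)=1+2\sqrt{C_{ie}(k)}$. The step demanding the most care is the bookkeeping of the angular weight: \Cref{lem:trace_refined} carries a factor $\mu$ on the face and a factor $\mu$ (through $\mu\dz v$) in the volume, whereas the target right-hand side uses the unweighted $L^2(K')$-norm, so I must invoke $\mu\le 1$ consistently — once to drop $\mu_F^t$ and once to pass from the $\mu^2$-weighted derivative norm to the plain one — while keeping track of the correct single power of $h_{K}$. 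Everything else is direct substitution, and no regularity beyond the polynomial structure in $z$ is required.
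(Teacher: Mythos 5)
Your proof is correct and follows essentially the same route as the paper's: apply \Cref{lem:trace_refined}, bound $\mu_F^t\le 1$, and control the weighted derivative term via the one-dimensional inverse inequality \cref{eq:inverse_inequality} applied fiberwise in $z$ together with $\mu^2\le 1$. Your explicit restriction to the strip $K'=(z_K^l,z_K^r)\times(\mu_F^b,\mu_F^t)$ is in fact slightly more careful than the paper's write-up, which nominally invokes $L^2(K)$ but integrates over exactly that strip — the distinction that matters in the presence of hanging nodes and that the lemma's statement requires.
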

\begin{proof}
Using \Cref{lem:trace_refined} we have that
    \begin{align*}
	\int_F |v|^2 \mu\,d\mu \leq \left(\frac{\mu_F^t}{z_K^r-z_K^l}\|v\|_{L^2(K)}+2\|\mu\dz v\|_{L^2(K)}\right) \|v\|_{L^2(K)}.
 \end{align*}
 Using \cref{eq:inverse_inequality}, we estimate the weighted derivative term as follows
 \begin{align*}
     \|\mu\dz v\|_{L^2(K)}^2 = \int_{\mu_F^b}^{\mu_F^t} \mu^2 \int_{z_K^l}^{z_K^r} |\dz v|^2\,dz \,d\mu \leq  \frac{C_{ie}(k)}{z_K^r-z_K^l} \int_{\mu_F^b}^{\mu_F^t} \mu^2 \int_{z_K^l}^{z_K^r} |v|^2\,dz \,d\mu.
 \end{align*}
 Using that $h_K=z_K^r-z_K^l$ and $\mu\leq 1$, we thus obtain that
 \begin{align*}
    \int_F |v|^2 \mu\,d\mu \leq \frac{1+2\sqrt{C_{ie}(k)}}{h_K}\|v\|_{L^2(K)}^2,
 \end{align*}
 which concludes the proof.
\end{proof}

\begin{remark}\label{rem:computing_Cie}
The value of $C_{ie}(k)$ of the inverse inequality in \cref{eq:inverse_inequality} can be computed by solving a small eigenvalue problem of dimension $k+1$, which is obtained by transforming \cref{eq:inverse_inequality} to the unit interval. In fact, $C_{ie}(k)$ is the maximal eigenvalue of 
\begin{align*}
    D v= \lambda M v,
\end{align*}
where $D_{i,j}=\int_{0}^1 \varphi_i'(\hat z)\varphi_j'(\hat z)\,d\hat z$ and $M_{i,j}=\int_{0}^1 \varphi_i(\hat z)\varphi_j(\hat z)\,d\hat z$  for a basis $\{\varphi_i\}_{i=0}^k$ of the space of polynomials of degree at most $k$ on the unit interval. Explicit bounds for $C_{ie}(k)$, which are optimal for $k=1,2$, are given in \cite{Chen2013}.
\end{remark}

\subsection{Derivation of the DG scheme}
In order to extend the bilinear form defined in \cref{eq:ep_bifo} to the broken space $V_h$, we denote with $\dz^h$ the broken derivative operator such that
\begin{align*}
	\left(\frac{\mu^2}{\sigma_t}\dz^h u_h,\dz^h v_h\right)=\sum_{K\in \mathcal{T}}\int_{K}\frac{\mu^2}{\sigma_t}\dz u_h \dz v_h \,dz\,d\mu
\end{align*}
for $u_h,v_h\in V_h$. In view of \cref{eq:ep_weak}, let us then introduce the bilinear form
\begin{align*}
    a^e_h(u,v) \coloneqq \ip{\frac{\mu^2}{\sigma_t}\dz^h u}{\dz^h v} + \ip{\sigma_t u}{v}-\ip{\sigma_s Pu}{v} + \boundary{u}{v},
\end{align*}
which is defined on $V(h)$. Note that $a^e$ and $a^e_h$ coincide on $V$.
In order to obtain a consistent bilinear form, $a^e_h$ needs to be modified. We follow \cite[Chapter~4]{diPietroErn} to determine the required modification. 
Choosing $w\in V_{*h} \coloneqq V_*+V_h$ and $v_h\in V_h$, integration by parts in $z$ shows that
\begin{align*}
&\sum_{K\in \mathcal{T}}\int_{K}\frac{\mu^2}{\sigma_t}\dz^h w \dz^h v_h \,dz\,d\mu + \left(\dz^h \left(\frac{\mu^2}{\sigma_t}\dz^h w\right)\right)  v_h \,dz\,d\mu\\
=&\sum_{K\in \mathcal{T}} \int_{\mu_K^l}^{\mu_K^r} \left(\frac{\mu}{\sigma_t(z_K^r)}\dz^hw(z_K^r)v_h(z_K^r)-\frac{\mu}{\sigma_t(z_K^l)}\dz^hw(z_K^l)v_h(z_K^l)\right)\mu\,d\mu\\
=& \sum_{F\in{\Fv}^b}\int_{F} \frac{\mu}{\sigma_t}\partial_n w v_h\, \mu\,d\mu+\sum_{F\in{\Fv}^i}\int_{F} \jump{\frac{\mu}{\sigma_t}\dz^h w v_h}\, \mu\,d\mu\\
=& \sum_{F\in{\Fv}^b}\int_{F} \frac{\mu}{\sigma_t}\partial_n w v_h\, \mu\,d\mu+\sum_{F\in{\Fv}^i}\int_{F} \left(\avg{\fluxh{w}}\jump{v_h}+\jump{\fluxh{w}}\llbrace v_h \rrbrace\right) \mu\,d\mu,
\end{align*}
where we used the identity $\jump{\frac{\mu}{\sigma_t}\dz^h w v_h}=\avg{\fluxh{w}}\jump{v_h}+\jump{\fluxh{w}}\llbrace v_h \rrbrace$ in the last step, see \cite[p. 123]{diPietroErn}.
%
Hence, for any solution $u\in V_*$ to \eqref{eq:ep1}--\eqref{eq:ep2} and $v\in V_h$ we have that
\begin{align*}
    a_h^e(u,v)= \ip{f}{v}+\boundary{g}{v} + \sum_{F\in {\Fv}^i} \int_F \left(\avg{\frac{\mu}{\sigma_t}\dz^h u}\jump{v} + \jump{\frac{\mu}{\sigma_t}\dz^h u}\llbrace v \rrbrace\right)\mu\,d\mu.
\end{align*}
Since $ \jump{\frac{\mu}{\sigma_t}\dz^h u}=0$ for all $F \in {\Fv}^i$ by $z$-continuity of the flux of $u\in V_*$, we arrive at the identity
\[
a_h^e(u,v)= \ip{f}{v}+\boundary{g}{v} + \sum_{F\in {\Fv}^i} \int_F \avg{\frac{\mu}{\sigma_t}\dz^h u}\jump{v} \mu\,d\mu.\]
Hence, a consistent bilinear form is given by
\begin{align*}
    a^c_h(u,v)\coloneqq a^e_h(u,v) - \sum_{F\in {\Fv}^i} \int_F \avg{\frac{\mu}{\sigma_t}\dz^h u}\jump{v}\mu\,d\mu,
\end{align*}
which, for $V_{*h} \coloneqq V_*+V_h$, is well-defined on $V_{*h}\times V_h$.
Using that $\jump{u}=0$ on $F\in {\Fv}^i$ for any $u\in V$, we arrive at the following symmetric and consistent bilinear form
\begin{align*}
    a^{cs}_h(u,v) \coloneqq a^e_h(u,v) - \sum_{F\in {\Fv}^i} \int_F \left(\avg{\frac{\mu}{\sigma_t}\dz^h u}\jump{v}+\avg{\frac{\mu}{\sigma_t}\dz^h v}\jump{u} \right)\mu\,d\mu,
\end{align*}
which is again well-defined on $V_{*h}\times V_h$.
We note that the summation over the vertical faces on the boundary $\Gamma$ is included in the term $\boundary{u}{v}$ in $a^e_h$. The stabilized bilinear form is then defined on $V_{*h}\times V_h$ by
\begin{align}
    \label{eq: SIP bilinear form}
	    a_h(u,v) \coloneqq a_h^{cs}(u,v) + \sum_{F\in {\Fv}^i} \frac{\alpha_F}{D_{F,\sigma}} \int_F \jump{u} \jump{v} \mu\, d\mu,
\end{align}
with $D_{F,\sigma}$ defined in \cref{eq:dim_less}
and with positive penalty parameter $\alpha_F>0$, which will be specified below. Since $\jump{u}=0$ on any $F\in {\Fv}^i$ and $u\in V$, it follows 
that $a_h$ is consistent, i.e., for $u\in V_*$ it holds that
\begin{align}\label{eq:consistency}
	a_h(u,v_h) = a^e(u,v_h)\qquad \forall v\in V_h.
\end{align}
The discrete variational problem is formulated as follows: Find $u_h\in V_h$ such that
\begin{align}
    \label{eq:DG}
	a_h(u_h,v_h) = \ip{f}{v_h} + \boundary{g}{v_h} \quad\forall v_h\in V_h.
\end{align}

\subsection{Analysis}
For the analysis of \eqref{eq:DG}, let us introduce mesh-dependent norms 
\begin{subequations}
\begin{align}
    \label{eq: norm Vh}
    \|v\|_{V_h}^2 &\coloneqq a_h^e(v,v) + \sum_{F\in {\Fv}^i} D_{F,\sigma}^{-1} \|\jump{v}\|_{L^2(F;\mu)}^2, \quad v \in V(h),\\
    \label{eq: norm star}
	\|v\|_{*}^2 &\coloneqq \|v\|_{V_h}^2 + \sum_{F\in {\Fv}^i} \frac{D_{F,\sigma}}{C_{dt}(\kz)} \left\Vert\avg{\fluxh{v}}\right\Vert_{L^2(F;\mu)}^2,\quad v\in V_{*h}.
 \end{align}
\end{subequations}
In order to show discrete stability and boundedness of $a_h$, we will use the following auxiliary lemma.
\begin{lemma}[Auxiliary lemma]\label{lem:technical}
	Let $F\in {\Fv}^i$ be shared by the elements $K_F^1,K_F^2\in\mathcal{T}$.
	Then, for $w\in V_h$ and $v\in V(h)$ it holds that
	\begin{align*}
        \int_F \avg{\fluxh{w}} \jump{v}\mu\, d\mu\leq \frac{\sqrt{C_{dt}(\kz)}}{2\sqrt{D_{F,\sigma}}} \norm{\frac{\mu}{\sqrt{\sigma_t}}\dz^h w}_{L^2(E_F^1 \cup E_F^2)}  \norm{\jump{v}}_{L^2(F;\mu)},
	\end{align*}
	with $C_{dt}(\kz)$ from \Cref{lem:discrete_trace} and sub-elements $E_F^i$, $i=1,2$, defined in \cref{eq:def_portion}.
\end{lemma}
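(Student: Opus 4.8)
The plan is to bound the face term by the product of the $L^2(F;\mu)$-norms of its two factors via Cauchy--Schwarz, then to control the norm of the average flux elementwise with the discrete trace inequality, the harmonic-mean constant $D_{F,\sigma}$ emerging from a final Cauchy--Schwarz in $\mathbb{R}^2$. First I would apply the weighted Cauchy--Schwarz inequality on $F$,
\[
\int_F \avg{\fluxh{w}} \jump{v}\mu\, d\mu \le \norm{\avg{\fluxh{w}}}_{L^2(F;\mu)} \norm{\jump{v}}_{L^2(F;\mu)},
\]
which already isolates the factor $\norm{\jump{v}}_{L^2(F;\mu)}$ appearing in the claim and reduces the task to estimating $\norm{\avg{\fluxh{w}}}_{L^2(F;\mu)}$. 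Writing $q_i \coloneqq \frac{\mu}{\sigma_{t\mid K_F^i}}\dz w_{\mid K_F^i}$ for $i=1,2$, where $\sigma_{t\mid K_F^i}$ is the constant value of $\sigma_t$ on $K_F^i$, we have $\avg{\fluxh{w}} = \tfrac12(q_1+q_2)$, so the triangle inequality gives $\norm{\avg{\fluxh{w}}}_{L^2(F;\mu)} \le \tfrac12(\norm{q_1}_{L^2(F;\mu)} + \norm{q_2}_{L^2(F;\mu)})$.

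Next, since $w_{\mid K_F^i}\in\mathbb{P}_{\kz+1}\otimes\mathbb{P}_{\kmu}$, the derivative $\dz w_{\mid K_F^i}$ lies in $\mathbb{P}_{\kz}\otimes\mathbb{P}_{\kmu}$, and multiplication by the $z$-independent factor $\mu/\sigma_{t\mid K_F^i}$ keeps $q_i$ a polynomial of degree $\kz$ in $z$. Hence \Cref{lem:discrete_trace} applies on $K_F^i$ with face $F$ and sub-element $E_F^i$ and yields $\norm{q_i}_{L^2(F;\mu)}^2 \le \frac{C_{dt}(\kz)}{h_{K_F^i}}\norm{q_i}_{L^2(E_F^i)}^2$. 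Because $\sigma_t$ is constant on $K_F^i$, the weight bookkeeping $\norm{q_i}_{L^2(E_F^i)}^2 = \sigma_{t\mid K_F^i}^{-1}\norm{\frac{\mu}{\sqrt{\sigma_t}}\dz^h w}_{L^2(E_F^i)}^2$ turns this into
\[
\norm{q_i}_{L^2(F;\mu)} \le \sqrt{\frac{C_{dt}(\kz)}{\sigma_{t\mid K_F^i}\,h_{K_F^i}}}\,\norm{\frac{\mu}{\sqrt{\sigma_t}}\dz^h w}_{L^2(E_F^i)}.
\]

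Finally, abbreviating $a_i \coloneqq (\sigma_{t\mid K_F^i}\,h_{K_F^i})^{-1}$, so that $D_{F,\sigma}^{-1}=a_1+a_2$ by \cref{eq:dim_less}, and $b_i \coloneqq \norm{\frac{\mu}{\sqrt{\sigma_t}}\dz^h w}_{L^2(E_F^i)}$, the previous bounds combine to $\norm{\avg{\fluxh{w}}}_{L^2(F;\mu)} \le \tfrac12\sqrt{C_{dt}(\kz)}\,(\sqrt{a_1}\,b_1 + \sqrt{a_2}\,b_2)$. A Cauchy--Schwarz inequality in $\mathbb{R}^2$ for the vectors $(\sqrt{a_1},\sqrt{a_2})$ and $(b_1,b_2)$ then gives $\sqrt{a_1}\,b_1+\sqrt{a_2}\,b_2 \le \sqrt{a_1+a_2}\,\sqrt{b_1^2+b_2^2} = D_{F,\sigma}^{-1/2}\,\norm{\frac{\mu}{\sqrt{\sigma_t}}\dz^h w}_{L^2(E_F^1\cup E_F^2)}$, and substituting back into the face Cauchy--Schwarz of the first step produces exactly the claimed estimate. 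The only genuinely delicate points I anticipate are the weight bookkeeping distinguishing $\mu/\sigma_t$ from $\mu/\sqrt{\sigma_t}$ in the trace step, and the observation that the harmonic-mean form of $D_{F,\sigma}$ is precisely what lets the two-dimensional Cauchy--Schwarz close the estimate with the constant $\tfrac12$; everything else is routine.
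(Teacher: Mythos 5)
Your proposal is correct and follows essentially the same route as the paper's proof: Cauchy--Schwarz on the weighted face integral, the discrete trace inequality of \Cref{lem:discrete_trace} applied to each one-sided flux (a polynomial of degree $\kz$ in $z$, with the $\sigma_t$-constancy handling the weight change from $\mu/\sigma_t$ to $\mu/\sqrt{\sigma_t}$), and a final Cauchy--Schwarz in $\mathbb{R}^2$ producing the harmonic-mean factor $D_{F,\sigma}^{-1/2}$. The only cosmetic difference is that you take the norm of the average and then use the triangle inequality, while the paper first splits the average into its two one-sided halves and applies Cauchy--Schwarz to each; the resulting bounds are identical.
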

\begin{proof}
	By definition of the average, we have that
	\begin{align*}
		\int_F \avg{\fluxh{w}} \jump{v}\mu\, d\mu=\frac{1}{2} \int_F {\fluxh[1]{w}} \jump{v}\mu \, d\mu + \frac{1}{2}\int_F {\fluxh[2]{w}} \jump{v}\mu\, d\mu,
	\end{align*}
	where $w_1,w_2$ and $\sigma^1_t,\sigma_t^2$ denote the restrictions of $w$ and $\sigma_t$ to $K_F^1$ and $K_F^2$, respectively.
    To estimate the first integral on the right-hand side, we employ the Cauchy-Schwarz inequality to obtain
	\begin{align*}
		 \int_F {\fluxh[1]{w}} \jump{v}\mu \, d\mu \leq \norm{\fluxh[1]{w}}_{L^2(F;\mu)} \norm{\jump{v}}_{L^2(F;\mu)}
		 \leq \frac{\sqrt{C_{dt}(\kz)}}{\sqrt{\sigma_t^1 h_{K_F^1}}} \norm{ \fluxhs[1]{w}}_{L^2(E_F^1)}\norm{\jump{v}}_{L^2(F;\mu)},
	\end{align*}
	where we used \Cref{lem:discrete_trace} applied to $\flux{w_1}$, which is a piecewise polynomial of degree $\kz$ in $z$.
	A similar estimate holds for the second integral. Using the Cauchy-Schwarz inequality, we then obtain that
	\begin{align*}
		\int_F \avg{\flux{w}} \jump{v}\mu\, d\mu \leq  \frac{\sqrt{C_{dt}(\kz)}}{2} \norm{\fluxhs{w}}_{L^2(E_F^1 \cup E_F^2)} \sqrt{\frac{1}{\sigma_t^1 h_{K_F^1}}+\frac{1}{\sigma_t^2 h_{K_F^2}}} \norm{\jump{v}}_{L^2(F;\mu)},
	\end{align*}
	which, in view of \cref{eq:dim_less}, concludes the proof.
\end{proof}
The auxiliary lemma allows to bound the consistency terms in $a_h$, which gives discrete stability of $a_h$.
\begin{lemma}[Discrete stability]
    \label{lem:discrete_stability}
    For any $v\in V_h$ it holds that
    \begin{equation*}
        a_h(v,v)\geq \frac{1}{2} \|v\|_{V_h}^2
    \end{equation*}
    provided that $\alpha_F\geq 1/2+C_{dt}(\kz)$ with constant $C_{dt}(\kz)$ given in \Cref{lem:discrete_trace}.
\end{lemma}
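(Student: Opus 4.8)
The plan is to test $a_h$ against $v$ itself and show that the only negative contribution, namely the symmetric consistency terms, can be absorbed into a fraction of the gradient energy together with the penalty term. First I would expand $a_h(v,v)$. By the definition \eqref{eq: SIP bilinear form} and the symmetry of $a_h^{cs}$, setting the two arguments equal collapses the two average--jump products into a single one, so that
\begin{align*}
  a_h(v,v) = a_h^e(v,v) - 2\sum_{F\in{\Fv}^i}\int_F \avg{\fluxh{v}}\jump{v}\,\mu\,d\mu + \sum_{F\in{\Fv}^i}\frac{\alpha_F}{D_{F,\sigma}}\norm{\jump{v}}_{L^2(F;\mu)}^2 .
\end{align*}
I would then record the decomposition $a_h^e(v,v) = \norm{\fluxhs{v}}_{L^2(\Omega)}^2 + \ip{\sigma_t v}{v} - \ip{\sigma_s Pv}{v} + \norm{v}_{L^2(\Gamma;\mu)}^2$, and note that the last three terms are nonnegative: since $(Pv)(z,\cdot)$ is constant in $\mu$, Cauchy--Schwarz gives $\ip{\sigma_s Pv}{v}\le\ip{\sigma_s v}{v}$, whence the remainder is bounded below by $\ip{(\sigma_t-\sigma_s)v}{v}+\norm{v}_{L^2(\Gamma;\mu)}^2\ge 0$ by \eqref{eq:ass1}. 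Crucially, this nonnegative remainder enters $a_h(v,v)$ untouched, i.e. with coefficient $1$, which is more than the coefficient $1/2$ it is allotted in $\tfrac12\|v\|_{V_h}^2$.

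The heart of the argument is the estimate of the consistency sum. Applying \Cref{lem:technical} with $w=v$ on each face and then Young's inequality $XY\le\tfrac{\varepsilon}{2}X^2+\tfrac{1}{2\varepsilon}Y^2$ with $X=\sqrt{C_{dt}(\kz)}\,\norm{\fluxhs{v}}_{L^2(E_F^1\cup E_F^2)}$ and $Y=D_{F,\sigma}^{-1/2}\norm{\jump{v}}_{L^2(F;\mu)}$, I obtain a face-wise bound whose gradient part is localized to the sub-elements $E_F^i$. The key combinatorial observation is that every element $K\in\T$ has one left and one right vertical boundary, and the sub-elements $E_F^i$ attached to faces lying on the same side of $K$ share the full $z$-extent of $K$ but have pairwise disjoint $\mu$-ranges (even in the presence of hanging nodes); hence summing the localized gradient energy over all faces counts each element at most twice, i.e.
\begin{align*}
  \sum_{F\in{\Fv}^i}\norm{\fluxhs{v}}_{L^2(E_F^1\cup E_F^2)}^2 \le 2\,\norm{\fluxhs{v}}_{L^2(\Omega)}^2 .
\end{align*}
Together with Young's inequality this yields
\begin{align*}
  2\sum_{F\in{\Fv}^i}\int_F\avg{\fluxh{v}}\jump{v}\,\mu\,d\mu \le \varepsilon\,C_{dt}(\kz)\,\norm{\fluxhs{v}}_{L^2(\Omega)}^2 + \frac{1}{2\varepsilon}\sum_{F\in{\Fv}^i}D_{F,\sigma}^{-1}\norm{\jump{v}}_{L^2(F;\mu)}^2 .
\end{align*}

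Finally I would assemble everything and choose the free parameter. Substituting the consistency bound gives
\begin{align*}
  a_h(v,v)\ge \big(1-\varepsilon C_{dt}(\kz)\big)\norm{\fluxhs{v}}_{L^2(\Omega)}^2 + R(v) + \sum_{F\in{\Fv}^i}\Big(\alpha_F-\tfrac{1}{2\varepsilon}\Big)D_{F,\sigma}^{-1}\norm{\jump{v}}_{L^2(F;\mu)}^2 ,
\end{align*}
where $R(v)=\ip{\sigma_t v}{v} - \ip{\sigma_s Pv}{v} + \norm{v}_{L^2(\Gamma;\mu)}^2\ge 0$ is the non-gradient remainder of $a_h^e(v,v)$. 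Taking $\varepsilon=1/(2C_{dt}(\kz))$ makes the gradient coefficient equal to $1/2$ and turns the penalty requirement $\alpha_F-\tfrac{1}{2\varepsilon}\ge\tfrac12$ into $\alpha_F\ge 1/2+C_{dt}(\kz)$, which is exactly the hypothesis; since $R(v)\ge\tfrac12 R(v)$, comparison with \eqref{eq: norm Vh} gives $a_h(v,v)\ge\tfrac12\|v\|_{V_h}^2$. I expect the main obstacle to be the combinatorial counting that produces the factor $2$ — and hence the additive $C_{dt}(\kz)$ in the stability threshold — which hinges on the disjointness of the $\mu$-ranges of same-side faces; once this is in place, the remainder is a single Young's inequality and bookkeeping.
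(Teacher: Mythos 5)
Your proposal is correct and follows essentially the same route as the paper: expand $a_h(v,v)$, bound the doubled average--jump term via \Cref{lem:technical} and Young's inequality, use the fact that the sub-elements $E_F^i$ cover each element at most twice to globalize the gradient energy, and choose the Young parameter so that the threshold $\alpha_F\geq 1/2+C_{dt}(\kz)$ emerges. The only difference is cosmetic: you make explicit the nonnegativity of the non-gradient part of $a_h^e(v,v)$ (via $\ip{\sigma_s Pv}{v}\le\ip{\sigma_t v}{v}$ and \cref{eq:ass1}), which the paper uses implicitly when passing to $\tfrac12 a_h^e(v,v)$.
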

\begin{proof}
	Let $v_h\in V_h$, and consider 
	\begin{align*}
		a_h(v_h,v_h) = a^e_h(v_h,v_h) - 2\sum_{F\in {\Fv}^i} \int_F \avg{\flux{v_h}} \jump{v_h}\mu\, d\mu + \sum_{F\in {\Fv}^i} \frac{\alpha_F}{D_{F,\sigma}} \int_F \jump{v_h}^2 \mu\, d\mu.
	\end{align*}
	Using \Cref{lem:technical}, and the fact that each sub-element $E_F^i$ touches at most two interior vertical faces,
	an application of the  Cauchy-Schwarz yields for any $\epsilon>0$,
	\begin{align*}
		2\sum_{F\in {\Fv}^i} \int_F \avg{\flux{v_h}} \jump{v_h}\mu \, d\mu \leq \epsilon \norm{\fluxhs{v_h}}^2_{L^2(\Omega)} + \sum_{F\in {\Fv}^i} \frac{C_{dt}}{2\epsilon D_{F,\sigma}}\int_F \jump{v_h}^2\mu\,  d\mu.
	\end{align*}
	Hence, by choosing $\epsilon=1/2$,
	\begin{align*}
		a_h(v_h,v_h)\geq \frac{1}{2} a^e_h(v_h,v_h)  + \sum_{F\in {\Fv}^i} \frac{\alpha_F-C_{dt}}{D_{F,\sigma}}\int_F \jump{v_h}^2\mu \, d\mu,
	\end{align*}
	from which we obtain the assertion.
\end{proof}
Discrete stability implies that the scheme \eqref{eq:DG} is well-posed, cf. \cite[Lemma~1.30]{diPietroErn}.
\begin{theorem}[Discrete well-posedness]\label{thm:wellposedness}
	Let $\alpha_F\geq 1/2+C_{dt}(\kz)$ with constant $C_{dt}(\kz)$ given in \Cref{lem:discrete_trace}.
    Then for any $f\in L^2(\Omega)$ and $g\in L^2(\Gamma;\mu)$ there exists a unique solution $u_h\in V_h$ of the discrete variational problem \eqref{eq:DG}.
\end{theorem}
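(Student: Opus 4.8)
The plan is to exploit the finite dimensionality of $V_h$, which reduces \eqref{eq:DG} to a square linear system; for such a system existence of a solution for every right-hand side is equivalent to uniqueness, so it suffices to show that $a_h$ is coercive on $V_h$ with respect to a genuine norm. The coercivity is exactly the content of \Cref{lem:discrete_stability}, which under the hypothesis $\alpha_F\geq 1/2+C_{dt}(\kz)$ gives $a_h(v_h,v_h)\geq \frac12\|v_h\|_{V_h}^2$ for all $v_h\in V_h$. Combined with the bilinearity of $a_h$ and the boundedness of the right-hand side functional, the statement then follows from the finite-dimensional Lax--Milgram lemma \cite[Lemma~1.30]{diPietroErn}.

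The one point that genuinely requires verification is that $\|\cdot\|_{V_h}$ defined in \eqref{eq: norm Vh} is a norm on $V_h$, and not merely a seminorm; this is what turns the coercivity estimate into injectivity. To this end I would reproduce, on the broken space, the $V$-ellipticity argument already used for $a^e$. In $\|v\|_{V_h}^2$ the jump contributions are nonnegative and, inside $a_h^e(v,v)$, so are the broken-gradient term and the boundary term $\boundary{v}{v}$, so it suffices to bound the volume part from below. Using that $(Pv)(z,\cdot)$ is constant in $\mu$ and that the $\mu$-interval has unit length, Cauchy--Schwarz yields $\int_0^1 (Pv)(z,\mu')^2\,d\mu'\leq \int_0^1 v(z,\mu')^2\,d\mu'$, whence, under assumption \eqref{eq:ass1},
\begin{align*}
    \ip{\sigma_t v}{v}-\ip{\sigma_s Pv}{v}\geq \int_0^L(\sigma_t-\sigma_s)\int_0^1 v^2\,d\mu\,dz\geq c\,\|v\|_{L^2(\Omega)}^2.
\end{align*}
Hence $\|v\|_{V_h}^2\geq c\,\|v\|_{L^2(\Omega)}^2$, so $\|v\|_{V_h}=0$ forces $v=0$, and $\|\cdot\|_{V_h}$ is indeed a norm on $V(h)$, in particular on $V_h$.

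It remains to confirm that $v_h\mapsto \ip{f}{v_h}+\boundary{g}{v_h}$ is bounded on $(V_h,\|\cdot\|_{V_h})$. By Cauchy--Schwarz, $|\ip{f}{v_h}|\leq \|f\|_{L^2(\Omega)}\|v_h\|_{L^2(\Omega)}$ and $|\boundary{g}{v_h}|\leq \|g\|_{L^2(\Gamma;\mu)}\,\boundary{v_h}{v_h}^{1/2}$. Both $\|v_h\|_{L^2(\Omega)}$ and $\boundary{v_h}{v_h}^{1/2}$ are dominated by $\|v_h\|_{V_h}$: the former by the lower bound $\|v_h\|_{V_h}^2\geq c\,\|v_h\|_{L^2(\Omega)}^2$ just derived, and the latter because the boundary term $\boundary{v_h}{v_h}$ is one of the nonnegative summands of $a_h^e(v_h,v_h)\leq \|v_h\|_{V_h}^2$; no separate appeal to the continuous trace inequality \eqref{eq:trace} is needed. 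With coercivity, boundedness of the functional, and the norm property established, the finite-dimensional Lax--Milgram lemma delivers the unique $u_h\in V_h$. I expect the only real obstacle to be the verification that $\|\cdot\|_{V_h}$ separates points; everything else is a direct appeal to \Cref{lem:discrete_stability} and standard finite-dimensional arguments.
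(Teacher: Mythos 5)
Your proof is correct and follows essentially the same route as the paper, which simply invokes the finite dimensionality of $V_h$ together with the coercivity from \Cref{lem:discrete_stability}. The extra details you supply --- that $\ip{\sigma_t v}{v}-\ip{\sigma_s Pv}{v}\geq c\,\|v\|_{L^2(\Omega)}^2$ makes $\|\cdot\|_{V_h}$ a genuine norm, and that the right-hand side is bounded with respect to it --- are exactly the facts the paper leaves implicit (they are established earlier for $a^e$ on $V$), so your write-up is a faithful, slightly more explicit version of the paper's argument.
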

\begin{proof}
 The space $V_h$ is finite-dimensional. Hence, \Cref{lem:discrete_stability} implies the assertion.
\end{proof}
To proceed with an abstract error estimate, we need the following boundedness result.
\begin{lemma}[Boundedness]\label{lem:boundedness}
For any $u\in V_{*h}$ and $v\in V_h$ it holds that
\begin{align*}
    a_h(u,v) \leq (C_{dt}+\alpha_F)\|u\|_{*} \|v\|_{V_h},
\end{align*}
where $\alpha_F$ is as in \Cref{lem:discrete_stability}.
\end{lemma}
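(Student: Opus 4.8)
The plan is to split $a_h(u,v)$ into the four pieces appearing in its definition — the volume form $a_h^e(u,v)$, the two interior-face terms $\sum_{F\in{\Fv}^i}\int_F\avg{\fluxh{u}}\jump{v}\,\mu\,d\mu$ and $\sum_{F\in{\Fv}^i}\int_F\avg{\fluxh{v}}\jump{u}\,\mu\,d\mu$, and the penalty term — and to bound each against $\|u\|_*\|v\|_{V_h}$. The guiding observation is the deliberate asymmetry of the two norms: $\|\cdot\|_*$ carries an extra weighted face term controlling $\avg{\fluxh{u}}$, whereas $\|\cdot\|_{V_h}$ does not. Consequently the two face terms must be handled by different devices, and respecting this asymmetry is the essence of the argument.

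For the volume form I would first record that $a_h^e$ is symmetric and positive semidefinite on $V(h)$: its scattering part obeys $(\sigma_s Pw,w)\le(\sigma_t w,w)$ by the Cauchy–Schwarz inequality in $\mu$ together with \cref{eq:ass1}, so that $a_h^e(w,w)\ge 0$ and, by definition of the norms, $a_h^e(w,w)\le\|w\|_{V_h}^2$ and $\norm{\fluxhs{w}}_{L^2(\Omega)}^2\le a_h^e(w,w)$. The Cauchy–Schwarz inequality for the semidefinite form then gives $a_h^e(u,v)\le\sqrt{a_h^e(u,u)}\,\sqrt{a_h^e(v,v)}\le\|u\|_*\|v\|_{V_h}$. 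For the first face term, a face-wise Cauchy–Schwarz in $L^2(F;\mu)$ followed by a discrete Cauchy–Schwarz over faces, inserting the weight $(D_{F,\sigma}/C_{dt})^{1/2}$ against its reciprocal, bounds it by $\sqrt{C_{dt}}$ times the weighted-average part of $\|u\|_*$ and the jump part of $\|v\|_{V_h}$; this is precisely the role of the $\avg{\fluxh{\cdot}}$ term built into $\|\cdot\|_*$.

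The second face term, $\sum_{F}\int_F\avg{\fluxh{v}}\jump{u}\,\mu\,d\mu$, is the main obstacle, because $\|\cdot\|_{V_h}$ contains no face-average term to pair against $\avg{\fluxh{v}}$. Here I would exploit that the first factor is a \emph{discrete} function, $v\in V_h$, and invoke \Cref{lem:technical} (which rests on the discrete trace inequality of \Cref{lem:discrete_trace}) to trade the face norm of $\avg{\fluxh{v}}$ for the broken volume norm $\norm{\fluxhs{v}}_{L^2(E_F^1\cup E_F^2)}$ at the cost of the factor $\sqrt{C_{dt}}/(2\sqrt{D_{F,\sigma}})$. A discrete Cauchy–Schwarz over faces together with the finite-overlap property that every point lies in at most two of the sub-elements $E_F^i$ then controls $\sum_F\norm{\fluxhs{v}}_{L^2(E_F^1\cup E_F^2)}^2$ by $2\norm{\fluxhs{v}}_{L^2(\Omega)}^2\le 2\,a_h^e(v,v)\le 2\|v\|_{V_h}^2$, while the jumps $\jump{u}$ are absorbed into the jump part of $\|u\|_{V_h}\le\|u\|_*$. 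The penalty term is the easiest: a face-wise and then discrete Cauchy–Schwarz bound it by $\alpha_F$ times the jump parts of $\|u\|_{V_h}$ and $\|v\|_{V_h}$.

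It remains to collect the four estimates, which I would phrase as a Euclidean pairing. The quantities controlling $u$ are $\sqrt{a_h^e(u,u)}$, the aggregated jump seminorm $\big(\sum_F D_{F,\sigma}^{-1}\norm{\jump{u}}_{L^2(F;\mu)}^2\big)^{1/2}$ and the aggregated weighted-average seminorm $\big(\sum_F \tfrac{D_{F,\sigma}}{C_{dt}}\norm{\avg{\fluxh{u}}}_{L^2(F;\mu)}^2\big)^{1/2}$, whose combined Euclidean length is exactly $\|u\|_*$; those controlling $v$ are $\sqrt{a_h^e(v,v)}$ and the aggregated jump seminorm of $v$, of combined length $\|v\|_{V_h}$. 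The four term estimates exhibit $a_h(u,v)$ as a pairing of these two vectors through a fixed coefficient matrix whose entries are built from $1$, $\sqrt{C_{dt}}$ and $\alpha_F$, so the product is at most the spectral norm of that matrix times $\|u\|_*\|v\|_{V_h}$. Bounding the spectral norm by the trace of the associated Gram matrix and using $\alpha_F\ge 1/2+C_{dt}$ together with $C_{dt}\ge 1$ yields the stated constant $C_{dt}+\alpha_F$. I expect the only delicate bookkeeping to be this final collection and the factor-two overlap in the obstacle term; everything else is routine Cauchy–Schwarz and the auxiliary lemma.
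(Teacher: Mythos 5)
Your proposal is correct and follows essentially the same route as the paper's proof: the same four-way decomposition, the same pairing of $\avg{\fluxh{u}}$ against the extra face term built into $\|\cdot\|_{*}$, the same use of \Cref{lem:technical} together with the finite overlap of the sub-elements $E_F^1,E_F^2$ to handle the $\avg{\fluxh{v}}\jump{u}$ term, and a final global Cauchy--Schwarz collection (which the paper closes by noting $C_{dt}+\alpha_F\geq 3/2$, matching your spectral-norm bookkeeping). No gaps.
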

\begin{proof}
	We have that
	\begin{align*}
		a_h(u,v) &= a^e_h(u,v) - \sum_{F\in {\Fv}^i} \int_F \avg{\frac{\mu}{\sigma_t}\dz^h u}\jump{v} \mu \,d\mu - \sum_{F\in {\Fv}^i} \int_F\avg{\frac{\mu}{\sigma_t}\dz^h v}\jump{u} \mu\,d\mu \\&+ \sum_{F\in {\Fv}^i} \frac{\alpha_F}{D_{F,\sigma}} \int_F \jump{u} \jump{v} \mu \,d\mu.
	\end{align*}
	 The first two terms can be estimated using the Cauchy-Schwarz inequality as follows
	\begin{align*}
		a^e_h(u,v) & \leq a^e_h(u,u)^{1/2} a^e_h(v,v)^{1/2}, \\
		\sum_{F\in {\Fv}^i} \int_F \avg{\frac{\mu}{\sigma_t}\dz^h u}\jump{v} \mu \,d\mu
        & \leq \sum_{F\in {\Fv}^i} \norm{\avg{\fluxh{u}}}_{L^2(F;\mu)} \norm{\jump{v}}_{L^2(F;\mu)}.
	\end{align*}
	For the third term, we use \Cref{lem:technical} to obtain
	\begin{align*}
		\sum_{F\in {\Fv}^i} \int_F \avg{\fluxh{v}} \jump{u}\mu\, d\mu 
		\leq 
		\sum_{F\in {\Fv}^i} \frac{\sqrt{C_{dt}}}{2\sqrt{D_{F,\sigma}}} \norm{\fluxhs{v}}_{L^2(E_F^1\cup E_F^2)}\norm{\jump{u}}_{L^2(F;\mu)}.
	\end{align*}
		To separate the terms that include $u$ and $v$, respectively, we apply the Cauchy-Schwarz inequality once more and use again that each sub-element $E_F^i$ touches at most two interior faces, to arrive at
		\begin{align*}
			a_h(u,v) \leq \left(a_h^e(u,u) + \sum_{F\in {\Fv}^i} \frac{D_{F,\sigma}}{C_{dt}}\norm{\avg{\fluxh{u}}}_{L^2(F;\mu)}^2 + \frac{{C_{dt}+\alpha_F}}{D_{F,\sigma}} \norm{\jump{u}}_{L^2(F;\mu)}^2 \right)^{1/2}\\
			\left(a_h^e(v,v) + \frac{1}{2}\norm{\fluxhs{v}}^2_{L^2(\Omega)} + \sum_{F\in {\Fv}^i}  \frac{{C_{dt}+\alpha_F}}{D_{F,\sigma}} \norm{\jump{v}}_{L^2(F;\mu)}^2 \right)^{1/2},
		\end{align*}
		which concludes the proof as $C_{dt}+\alpha_F\geq 3/2$.
\end{proof}
Before continuing, an inspection of the previous proof shows that we have the following corollary stating boundedness of $a_h$ on $V_h$.
\begin{corollary}[Discrete boundedness]
    \label{cor:discrete_boundedness}
    For any $u,v\in V_h$ it holds that
    \begin{align*}
        a_h(u,v) \leq (C_{dt}+\alpha_F)\|u\|_{V_h} \|v\|_{V_h},
    \end{align*}
    where $\alpha_F$ is as in \Cref{lem:discrete_stability}.
\end{corollary}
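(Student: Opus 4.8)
The plan is to retrace the proof of \Cref{lem:boundedness} almost verbatim, exploiting the single structural simplification available here: both arguments now lie in $V_h$, so \Cref{lem:technical} applies to \emph{both} consistency terms and not only to the one carrying $v$. First I would expand $a_h(u,v)$ according to its definition \cref{eq: SIP bilinear form} into the volume form $a_h^e(u,v)$, the two consistency integrals over the faces $F\in{\Fv}^i$ involving $\avg{\fluxh{u}}\jump{v}$ and $\avg{\fluxh{v}}\jump{u}$, and the penalty term. The volume form is bounded by $a_h^e(u,u)^{1/2}a_h^e(v,v)^{1/2}$ and the penalty term by a face-wise Cauchy-Schwarz inequality, exactly as in \Cref{lem:boundedness}.

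The one place where I would deviate is the estimate of $\int_F\avg{\fluxh{u}}\jump{v}\,\mu\,d\mu$. In \Cref{lem:boundedness} the first argument only belonged to $V_{*h}$, so this term had to be estimated by $\norm{\avg{\fluxh{u}}}_{L^2(F;\mu)}\norm{\jump{v}}_{L^2(F;\mu)}$, and it is precisely this factor that introduces the extra average-flux contribution distinguishing $\norm{\cdot}_*$ from $\norm{\cdot}_{V_h}$. Since here $u\in V_h$, I would instead apply \Cref{lem:technical} with $w=u$ to obtain the bound $\tfrac{\sqrt{C_{dt}}}{2\sqrt{D_{F,\sigma}}}\norm{\fluxhs{u}}_{L^2(E_F^1\cup E_F^2)}\norm{\jump{v}}_{L^2(F;\mu)}$, which mirrors the estimate already used for the symmetric term with $w=v$.

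With both consistency terms now controlled by face contributions of the same shape, a final Cauchy-Schwarz over the faces—using, as in \Cref{lem:discrete_stability}, that each sub-element $E_F^i$ touches at most two interior vertical faces so that $\sum_F\norm{\fluxhs{w}}_{L^2(E_F^1\cup E_F^2)}^2\le 2\norm{\fluxhs{w}}_{L^2(\Omega)}^2$—separates the estimate into two symmetric factors of the form
\begin{align*}
\Big(a_h^e(w,w)+\tfrac12\norm{\fluxhs{w}}_{L^2(\Omega)}^2+\sum_{F\in{\Fv}^i}\tfrac{C_{dt}+\alpha_F}{D_{F,\sigma}}\norm{\jump{w}}_{L^2(F;\mu)}^2\Big)^{1/2},\qquad w\in\{u,v\}.
\end{align*}
The first factor now has exactly the structure of the second factor in \Cref{lem:boundedness}; in particular the average-flux term of $\norm{\cdot}_*$ has disappeared.

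It then remains to show that each factor is at most $\sqrt{C_{dt}+\alpha_F}\,\norm{w}_{V_h}$. For this I would use that $\norm{\fluxhs{w}}_{L^2(\Omega)}^2\le a_h^e(w,w)$, which holds because the remaining contributions to $a_h^e(w,w)$—the scattering/absorption term, nonnegative by \cref{eq:ass1}, and the boundary term $\boundary{w}{w}\ge0$—are nonnegative. Hence $a_h^e(w,w)+\tfrac12\norm{\fluxhs{w}}_{L^2(\Omega)}^2\le\tfrac32 a_h^e(w,w)\le(C_{dt}+\alpha_F)a_h^e(w,w)$ since $C_{dt}+\alpha_F\ge3/2$, and comparing with the definition \cref{eq: norm Vh} of $\norm{\cdot}_{V_h}$ gives the bound. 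I do not expect a genuine obstacle: the only delicate point is the bookkeeping in the face sums and the verification that the jump weights combine to exactly $C_{dt}+\alpha_F$, which is the same arithmetic already carried out in \Cref{lem:boundedness,lem:discrete_stability}.
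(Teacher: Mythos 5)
Your proposal is correct and is exactly the ``inspection of the previous proof'' that the paper invokes without writing out: since $u\in V_h$, \Cref{lem:technical} applies to the $\avg{\fluxh{u}}\jump{v}$ term as well, the average-flux contribution of $\|\cdot\|_*$ disappears, and the bound $\norm{\fluxhs{w}}_{L^2(\Omega)}^2\le a_h^e(w,w)$ together with $C_{dt}+\alpha_F\ge 3/2$ yields the stated constant. No gaps.
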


Combining consistency, stability and boundedness ensures that the discrete solution $u_h$ to \eqref{eq:DG} yields a quasi-best approximation to $u$, cf. \cite[Theorem~1.35]{diPietroErn}.
\begin{theorem}[Error estimate]\label{thm:error_estimate}
    Let $f\in L^2(\Omega)$ and $g\in L^2(\Gamma;\mu)$, and denote $u\in V_*$ the solution to \eqref{eq:ep1}--\eqref{eq:ep2} and $u_h\in V_h$ the solution to \eqref{eq:DG}.
 Then the following error estimate holds true
    \[ \|u-u_h\|_{V_h} \leq \left(1+2( C_{dt}(\kz)+\alpha_F)\right)\inf_{v_h\in V_h}\|u-v_h\|_{*},\]
	provided that $\alpha_F\geq 1/2+C_{dt}(\kz)$.
\end{theorem}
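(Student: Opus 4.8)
The plan is to follow the abstract Strang/Céa-type argument for discontinuous Galerkin methods, as in \cite[Theorem~1.35]{diPietroErn}, combining the three ingredients already established: consistency \eqref{eq:consistency}, discrete stability (\Cref{lem:discrete_stability}), and boundedness (\Cref{lem:boundedness} and \Cref{cor:discrete_boundedness}). First I would fix an arbitrary $v_h\in V_h$ and split the error using the triangle inequality,
\begin{align*}
\|u-u_h\|_{V_h} \leq \|u-v_h\|_{V_h} + \|v_h-u_h\|_{V_h}.
\end{align*}
Since $u-v_h$ is measured in $\|\cdot\|_{V_h}$ and the infimum on the right is taken in the stronger norm $\|\cdot\|_*$, the first term is immediately controlled by $\|u-v_h\|_*$ because $\|\cdot\|_{V_h}\leq\|\cdot\|_*$ by the definition \eqref{eq: norm star}. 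The crux is therefore to estimate the discrete quantity $\|v_h-u_h\|_{V_h}$.

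For the discrete term, set $w_h\coloneqq u_h-v_h\in V_h$ and apply discrete stability to get $\tfrac12\|w_h\|_{V_h}^2 \leq a_h(w_h,w_h)$. I would then rewrite $a_h(w_h,w_h)=a_h(u_h,w_h)-a_h(v_h,w_h)$ and invoke the Galerkin identity: by consistency \eqref{eq:consistency} and the discrete problem \eqref{eq:DG}, both $a_h(u,w_h)$ and $a_h(u_h,w_h)$ equal $\ip{f}{w_h}+\boundary{g}{w_h}$, so $a_h(u_h,w_h)=a_h(u,w_h)$. Substituting yields
\begin{align*}
\tfrac12\|w_h\|_{V_h}^2 \leq a_h(u-v_h,w_h).
\end{align*}
Now $u-v_h\in V_{*h}$ and $w_h\in V_h$, so \Cref{lem:boundedness} applies and gives $a_h(u-v_h,w_h)\leq (C_{dt}+\alpha_F)\|u-v_h\|_*\|w_h\|_{V_h}$. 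Dividing through by $\|w_h\|_{V_h}$ produces $\|w_h\|_{V_h}\leq 2(C_{dt}(\kz)+\alpha_F)\|u-v_h\|_*$.

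Combining the two contributions gives $\|u-u_h\|_{V_h}\leq \bigl(1+2(C_{dt}(\kz)+\alpha_F)\bigr)\|u-v_h\|_*$, and taking the infimum over $v_h\in V_h$ yields the claimed bound. The stability hypothesis $\alpha_F\geq 1/2+C_{dt}(\kz)$ is exactly what guarantees the coercivity constant $1/2$ used above. The one subtle point to verify carefully—the part I expect to be the main (if modest) obstacle—is that the boundedness lemma is legitimately applicable to the pair $(u-v_h,w_h)$: this requires that $u-v_h$ lie in $V_{*h}$, which holds because $u\in V_*$ and $v_h\in V_h$, so the first argument sits in the domain $V_{*h}$ on which $a_h$ and the $\|\cdot\|_*$ norm are defined, while the second argument $w_h$ is genuinely discrete. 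Once this membership is confirmed, the remaining manipulations are purely algebraic.
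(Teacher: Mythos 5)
Your proof is correct and is precisely the standard quasi-best-approximation argument (triangle inequality, coercivity on the discrete error, Galerkin orthogonality via consistency, then boundedness) that the paper itself invokes by citing \cite[Theorem~1.35]{diPietroErn} in lieu of writing out the proof. All hypotheses are checked appropriately, in particular that $u-v_h\in V_{*h}$ so that \Cref{lem:boundedness} applies, so nothing is missing.
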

\begin{remark}
    Note that $C_{dt}(k)$ is monotonically increasing in $k$. Thus, replacing $C_{dt}(\kz)$ by $C_{dt}(0)=1$ in \cref{eq: norm star} yields a norm that is independent of $\kz$ and that is an upper bound for $\|\cdot\|_{*}$. Hence, the error estimate in \Cref{thm:error_estimate} deteriorates for increasing $\kz$ only through the constant pre-multiplying the best-approximation error.
\end{remark}
\begin{remark}
    \label{rem:convergence_rate}
    Assuming that the exact solution is sufficiently regular, say $u\in H^{k+1}(\Omega)$, denoting $h$ the maximal mesh-size, and setting $k=k_z=k_\mu$, standard interpolation error estimates yield a convergence rate of $O(h^{k+1})$ for $ \|u-u_h\|_{V_h}$, see \cite[Lemmata 1.58, 1.59, p. 31-32]{diPietroErn} and \cite[Corollary 4.22, p. 132]{diPietroErn}.
\end{remark}
\begin{remark}
    In view of \Cref{rem:computing_Cie}, the value of $C_{dt}(\kz)$ can be computed explicitly once $C_{ie}$ is known. Hence, we can give an explicit value for the penalization parameter $\alpha_F$ such that the discontinuous Galerkin scheme \cref{eq:DG} is well-posed and the error enjoys the bound given in \Cref{thm:error_estimate}. We note that we choose here $\alpha_F$ to be the same for all interior faces. Moreover, the choice of $\alpha_F$ is independent of the partition $\T$ and the mean-free path $1/\sigma_t$; while the dependence on the mean-free path is explicit through $D_{F,\sigma}$, which might be exploited if the behavior of the scheme is investigated in the diffusion limit where the mean-free path tends to zero. Let us refer to \cite{guermond2014discontinuous} for a detailed discussion about issues of DG schemes for radiative transfer in the diffusion limit.
\end{remark}
\begin{remark}\label{rem:parameter_dependent_bilinearform}
    Instead of using the symmetric bilinear form $a_h^{cs}$ to define $a_h$ in \cref{eq: SIP bilinear form}, we may use the more general bilinear form
    $$
        a^e_h(u,v) - \sum_{F\in {\Fv}^i} \int_F \left(\avg{\frac{\mu}{\sigma_t}\dz^h u}\jump{v}+\lambda \avg{\frac{\mu}{\sigma_t}\dz^h v}\jump{u} \right)\mu\,d\mu,
    $$
    with parameter $\lambda\in[-1,1]$, cf. \cite{DAWSON2004}. The choice $\lambda=1$ leads to $a_h^{cs}$, while the choices $\lambda=0$ or $\lambda=-1$ yield incomplete interior penalty and the non-symmetric interior penalty discontinuous Galerkin schemes, respectively, see also \cite{Houston_2002,Riviere2001} for the case $\lambda=-1$. We note that for $\lambda=-1$, the terms involving the face integral vanish for $u=v$, and hence coercivity can be proven straight-forward. 
    However, since symmetry is lost, improved $L^2$-convergence rates for sufficiently smooth solutions do not hold in general, see \cite{Arnold_2002} and \Cref{tab:L2-convergence_uniform_lambda_minus1} below. Moreover, the numerical solution of large non-symmetric linear systems can be more difficult than in the symmetric case. We mention that the results in this section can be extended with minor modifications to the general case $-1\leq \lambda\leq 1$.
\end{remark}

\section{Numerical examples}\label{sec:Sec4_numerical_examples_PS_DG}
In the following we confirm the theoretical statements about stability and convergence of \Cref{sec:Sec3_DG_derivation_analysis} numerically.
Let $\sigma_s=1/2$ and $\sigma_t=1$ and the width of the slab be $L=1$. We then define the source terms $f$ and $g$ in \eqref{eq:ep1}--\eqref{eq:ep2} such that the exact solution is given by the following function
\begin{align}\label{eq:u_smooth}
	u(z,\mu) = \big(1+\exp(-\mu)\big)\chi_{\{\mu>1/2\}}(\mu) \exp(-z^2).
\end{align}
Here, $\chi_{\{\mu>1/2\}}(\mu)$ denotes the indicator function of the interval $(1/2,1)$, i.e., $u$ is discontinuous in $\mu=1/2$, but note that $u\in V_*$.
We compute the DG solution $u_h$ of \eqref{eq:DG} on a sequence of uniformly refined meshes, initially consisting of $16$ elements, see \Cref{fig:quadmesh}. Hence, the discontinuity in $u$ is resolved by the mesh. 

For our computations we use the spaces $V_h$ with $\kz$ and $\kmu$ in \cref{eq:Vh}, that is piecewise polynomials of degree $\kmu$ in $\mu$ and piecewise polynomials of degree $\kz+1$ in $z$.
The value of $C_{ie}(\kz)$ of the inverse inequality in \cref{eq:inverse_inequality} is computed numerically by solving a small eigenvalue problem of dimension $\kz+1$, see \Cref{rem:computing_Cie}.
 
For the numerical solution of the resulting linear systems, we a usual fixed-point iteration \cite{AdamsLarsen02}: Introducing the auxiliary bilinear form $b_h(u,v)=a_h(u,v)-(\sigma_s Pu,v)$, the fixed-point iteration maps $u_h^{n}$ to $u_h^{n+1}$ by solving
\begin{align}\label{eq:si}
    b_h(u_h^{n+1},v) = (\sigma_s P u_h^n,v)+\ip{f}{v}+\boundary{g}{v} \quad\forall v\in V_h.
\end{align}
The fixed-point iteration converges linearly with a rate $\sigma_s/\sigma_t$ \cite{AdamsLarsen02}, which is bounded by $1/2$ in this example. The iteration is stopped as soon as $\|u_h^{n+1}-u_h^n\|_{L^2(\Omega)}<10^{-10}$. For acceleration of the source iteration by preconditioning see also \cite{AdamsLarsen02,palii2020convergent}. The matrix representation of $b_h$ has a block structure for the uniformly refined meshes considered in this section, and its inverse can be applied efficiently via LU factorization.

\Cref{tab:convergence_uniform} shows the $V_h$ norm of the error $u-u_h$ between the exact and the numerical solution for $k = k_z = k_\mu$. 
For fixed $k$, we observe a convergence rate of $k+1$ under mesh refinement, which is expected from the smoothness of $u$ per element and \Cref{rem:convergence_rate}.
In particular, inspecting \Cref{tab:convergence_uniform} by rows, we notice linear convergence for $k=0$, quadratic convergence for $k=1$, and so on.
\begin{table}[ht!]
\centering\small\setlength\tabcolsep{0.75em}
\caption{Error $\|u-u_h\|_{V_h}$ for different local polynomial degrees with $k=\kz=\kmu$, see \cref{eq:Vh}, and uniformly refined meshes with $N$ elements and solution $u$ defined in \cref{eq:u_smooth}.\label{tab:convergence_uniform}}
\begin{tabular}{c r r r r r r r}
\toprule
 $N$ 				& 		16 	& 64 		& 256 		& 1\,024	& 4\,096 & 16\,384 & 65\,536\\
 \hline
$k=0$	&  7.07e-02 & 3.53e-02 & 1.76e-02 & 8.81e-03 & 4.40e-03 & 2.20e-03 & 1.10e-03 \\
$k=1$	&  5.51e-03 & 1.38e-03 & 3.44e-04 & 8.60e-05 & 2.15e-05 & 5.37e-06 & 1.34e-06 \\
$k=2$	&  2.77e-04 & 3.47e-05 & 4.33e-06 & 5.41e-07 & 6.77e-08 & 8.46e-09 & 1.06e-09 \\ 
$k=3$	&  1.38e-05 & 8.69e-07 & 5.44e-08  & 3.40e-09 & 2.16e-10 & 4.20e-11 & 4.16e-11 \\
\bottomrule
\end{tabular}
\end{table}

Since the coefficients are smooth, we may expect higher order convergence in the $L^2$-norm for the symmetric formulation if $\kmu=\kz+1$, see \Cref{rem:parameter_dependent_bilinearform}.
In \Cref{tab:L2-convergence_uniform} and \Cref{tab:L2-convergence_uniform_lambda_minus1} we compare the symmetric interior penalty method ($\lambda=1$) with its non-symmetric counterpart ($\lambda=-1$), with $\lambda$ introduced in \Cref{rem:parameter_dependent_bilinearform}.
\Cref{tab:L2-convergence_uniform} shows that, for fixed $\kmu$, the $L^2$-error decays upon mesh refinement at an improved rate of $O(h^{\kmu+1})$ for the symmetric interior penalty method. 
This improved convergence rate can also be observed for the non-symmetric interior penalty method if the employed polynomial degrees are odd, while the suboptimal rate  $O(h^{\kmu})$ can be observed if the used polynomial degrees are even, cf. \cite{Hozman2009} for a similar observation on the convergence rates for the unsymmetric interior penalty method in the context of non-stationary convection diffusion problems.

\begin{table}[ht!]
\centering\small\setlength\tabcolsep{0.75em}
\caption{$L^2$-error $\|u-u_h\|_{L^2(\Omega)}$ for different local polynomial degrees with $k=\kz$ and $\kmu=\kz+1$, see \cref{eq:Vh}, and uniformly refined meshes with $N$ elements and solution $u$ defined in \cref{eq:u_smooth}.\label{tab:L2-convergence_uniform}}
\begin{tabular}{c r r r r r r r}
\toprule
 $N$ 				& 		16 	& 64 		& 256 		& 1\,024	& 4\,096 & 16\,384 & 65\,536\\
 \hline
$k=0$	& 5.75e-03 & 1.49e-03 & 3.78e-04 & 9.46e-05 & 2.37e-05 & 5.92e-06 & 1.48e-06 \\ 
$k=1$	& 2.13e-04 & 2.60e-05 & 3.22e-06 & 4.02e-07 & 5.02e-08 & 6.27e-09 & 7.84e-10 \\ 
$k=2$	& 9.43e-06 & 6.03e-07 & 3.79e-08 & 2.37e-09 & 1.53e-10 & 3.86e-11 & 3.79e-11 \\ 
$k=3$   & 3.11e-07 & 9.64e-09 & 3.03e-10 & 3.85e-11 & 3.75e-11 & 3.75e-11 & 3.92e-11 \\ 
\bottomrule
\end{tabular}
\end{table}

\begin{table}[ht!]
\centering\small\setlength\tabcolsep{0.75em}
\caption{$L^2$-error $\|u-u_h^\lambda\|_{L^2(\Omega)}$ for different local polynomial degrees with $k=\kz$ and $\kmu=\kz+1$, see \cref{eq:Vh}, and uniformly refined meshes with $N$ elements and solution $u$ defined in \cref{eq:u_smooth}. Here, the unsymmetric interior penalty method with $\lambda=-1$ described in \Cref{rem:parameter_dependent_bilinearform} is used to compute the numerical solution $u_h^\lambda$. \label{tab:L2-convergence_uniform_lambda_minus1}}
\begin{tabular}{c r r r r r r r}
\toprule
 $N$ 				& 		16 	& 64 		& 256 		& 1\,024	& 4\,096 & 16\,384 & 65\,536\\
 \hline
$k=0$ & 4.46e-03 & 1.10e-03 & 2.74e-04 & 6.84e-05 & 1.71e-05 & 4.27e-06 & 1.07e-06 \\ 
$k=1$ & 5.26e-04 & 1.17e-04 & 2.80e-05 & 6.93e-06 & 1.73e-06 & 4.31e-07 & 1.08e-07 \\ 
$k=2$ & 1.08e-05 & 6.67e-07 & 4.16e-08 & 2.59e-09 & 1.65e-10 & 3.84e-11 & 3.82e-11 \\ 
$k=3$ & 6.27e-07 & 3.31e-08 & 1.96e-09 & 1.30e-10 & 3.89e-11 & 3.74e-11 & 3.93e-11\\ 
\bottomrule
\end{tabular}
\end{table}

\section{Adaptivity}
\label{sec:aposteriori}
In this section we show, by examples, that hierarchical error estimators, see, e.g., \cite{Karakashian_2003} for the elliptic case, as well as estimators based on averaging the approximate solutions are a possible choice to adaptively construct optimal partitions $\T$ of $\Omega$ to approximate non-smooth solutions to \cref{eq:ep1}. In particular, we show how adaptive mesh refinement is beneficial if the discontinuity of the solution is not resolved by the mesh. 
To highlight the dependency on the partition $\T$ of $\Omega$ and on the polynomial degree, we will write $u_{\T,k}$ instead of $u_{h}$ for the solution of the discontinuous Galerkin scheme \eqref{eq:DG}. 
Similarly, assuming $k\coloneqq\kz=\kmu$, we write $V_{\T,k}$ for the corresponding approximation space instead of $V_h$, see \cref{eq:Vh}.
Let $\T'$ be another partition of $\Omega$ such that $\T\subset\T'$, and let $k'\geq k$. 
Denoting $\|\cdot\|$ some norm defined on $V+V_{\T,k}+V_{\T',k'}$, and supposing the saturation assumption, which has been used, e.g., also in \cite{BankSmith1993},
\begin{align*}
	\|u-u_{\T',k'}\| \leq \gamma\|u-u_{\T,k}\|,
\end{align*}
for some universal constant $\gamma<1$, 
we obtain the equivalence between the approximation error and the estimator
    $\zeta \coloneqq u_{\T', k'} - u_{\T, k}$,
i.e., 
\begin{align*}
	(1+\gamma)^{-1}\|\zeta\| \leq \norm{u-u_{\T,k}} \leq (1-\gamma)^{-1}\norm{\zeta}.
\end{align*}
For a justification of the saturation assumption in the context of elliptic problems we refer to \cite{Cartensten_Gallisti2016}. 
In the following numerical experiments, we use the norm $\normc_{\T}$, defined as
\begin{align}
    \label{eq: broken H1 norm}
    \norm{v}^2_{\T} := \sum_{K\in\T} \left( \norm{\mu \dz^h v}_{L^2(K)}^2 + \norm{v}_{L^2(K)}^2 \right) \quad \forall v \in V_{*h},
\end{align}
to investigate the behaviour of two hierarchical error indicators for different test cases. The local error contributions are then given by
\begin{align*}
    \eta_K \coloneqq (\|\mu\dz^h \zeta\|_{L^2(K)}^2 + \|\zeta\|_{L^2(K)}^2)^{1/2},
\end{align*}
where $K\in\T$.
The mesh is then refined by a D\"orfler marking strategy \cite{Doerfler_1996,Verfuerth2013}, that is all elements in the set $\mathcal{K}\subset\mathcal{T}$ are refined, where $\mathcal{K}\subset\mathcal{T}$ is the set of smallest cardinality such that
\begin{align}
    \label{eq: Doerfler marking strategy}
	\sum_{K\in\mathcal{K}}\eta_K^2 > \theta \sum_{K\in \mathcal{T}}\eta_K^2,
\end{align}
where $0<\theta\leq 1$ is the bulk-chasing parameter. Differently from the previous section, we assume $\sigma_t = 1$ and $\sigma_s = 0$. Moreover, we consider two different manufactured solution $u_1$ and $u_2$ given by
\begin{align}
    \label{eq: Test case 6}
    u_1(z, \mu) & \coloneqq (\mu^2 + z^2)^{1/4}, \\
    \label{eq: Test case 3}
    u_2(z, \mu) & \coloneqq \left(1 + \chi_{\{\mu>1/\sqrt{2}\}}(\mu) \right)\exp(-z^2).
\end{align}
The choice of $1/\sqrt{2}$ in the indicator function ensures that the corresponding line discontinuity of $u_2$ is never resolved by our mesh. Furthermore, $\mu\dz u_1$ is bounded and vanishes in $(0,0)$. In particular, we note that $u_1,u_2\in V_*$. 
In the following we report on numerical examples using the D\"orfler parameter $\theta \coloneqq 0.75$. We note that we obtained similar results for the choice $\theta=0.3$.

\subsection{Hierarchical \textit{p}-error estimator}
\label{sec: Hierarchical p error estimator}
Setting $\T' \coloneqq\T$ and $k' \coloneqq k+1$, the hierarchical $p$-error estimator is defined as
\begin{align}
    \label{eq:hierarchical_error_estimator1}
	\zeta_p \coloneqq u_{\T,k}-u_{\T,k+1}.
\end{align}
We note that $\alpha_F\coloneqq 1/2+C_{dt}(k+1)$ is used for the stabilization parameter to obtain both numerical solutions $u_{\T,k}$ and $u_{\T,k+1}$.

\Cref{fig: TC6-H1 norm p-estimator} and \Cref{fig: TC3-H1 norm p-estimator} show the convergence rates for adaptively refined meshes using the $\zeta_p$ indicator, for different values of the polynomial degree $k$. We observe that for the manufactured solution $u_1$, which has a point singularity in the origin, the indicator follows tightly the curve of the actual error. Moreover, the error decays at the optimal rate $1/\sqrt{N^{k+1}}$, with  $N$ denoting the number of degrees of freedom in $V_{\T,k}$, also shown for comparison. 

For the manufactured solution $u_2$ with line discontinuity defined in \cref{eq: Test case 3}, the convergence behavior is different.
For $k=0$, the error and the error estimator stay rather close, and follow the curve for the optimal rate. 
For $k\geq 1$ the rate is sub-optimal, which is expected from a counting argument. 
Moreover, also the error estimator is not as close to the true error anymore, compared to the test case with $u_1$. 

\begin{figure}\centering
	\includegraphics[width=.49\textwidth]{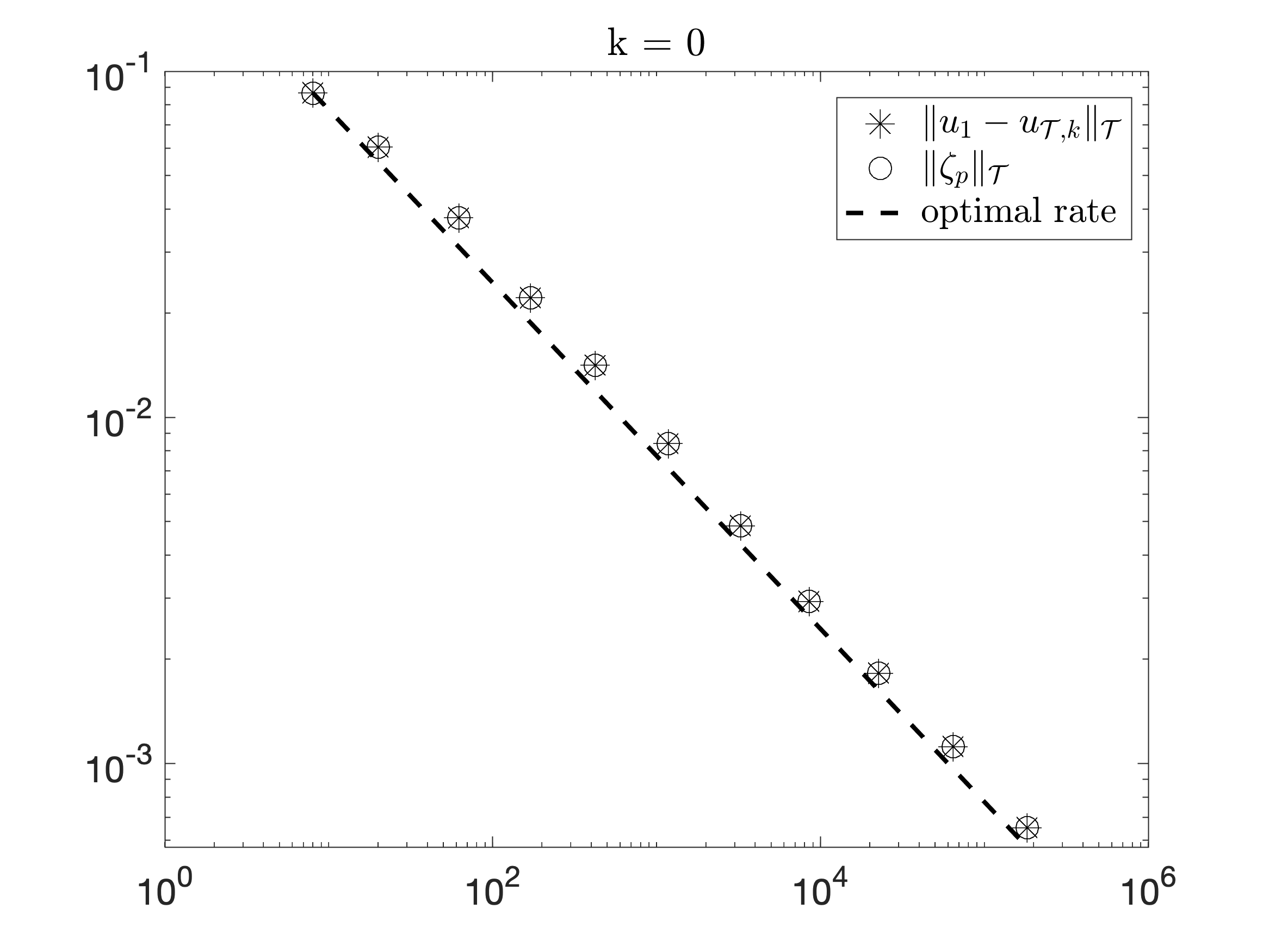}
	\includegraphics[width=.49\textwidth]{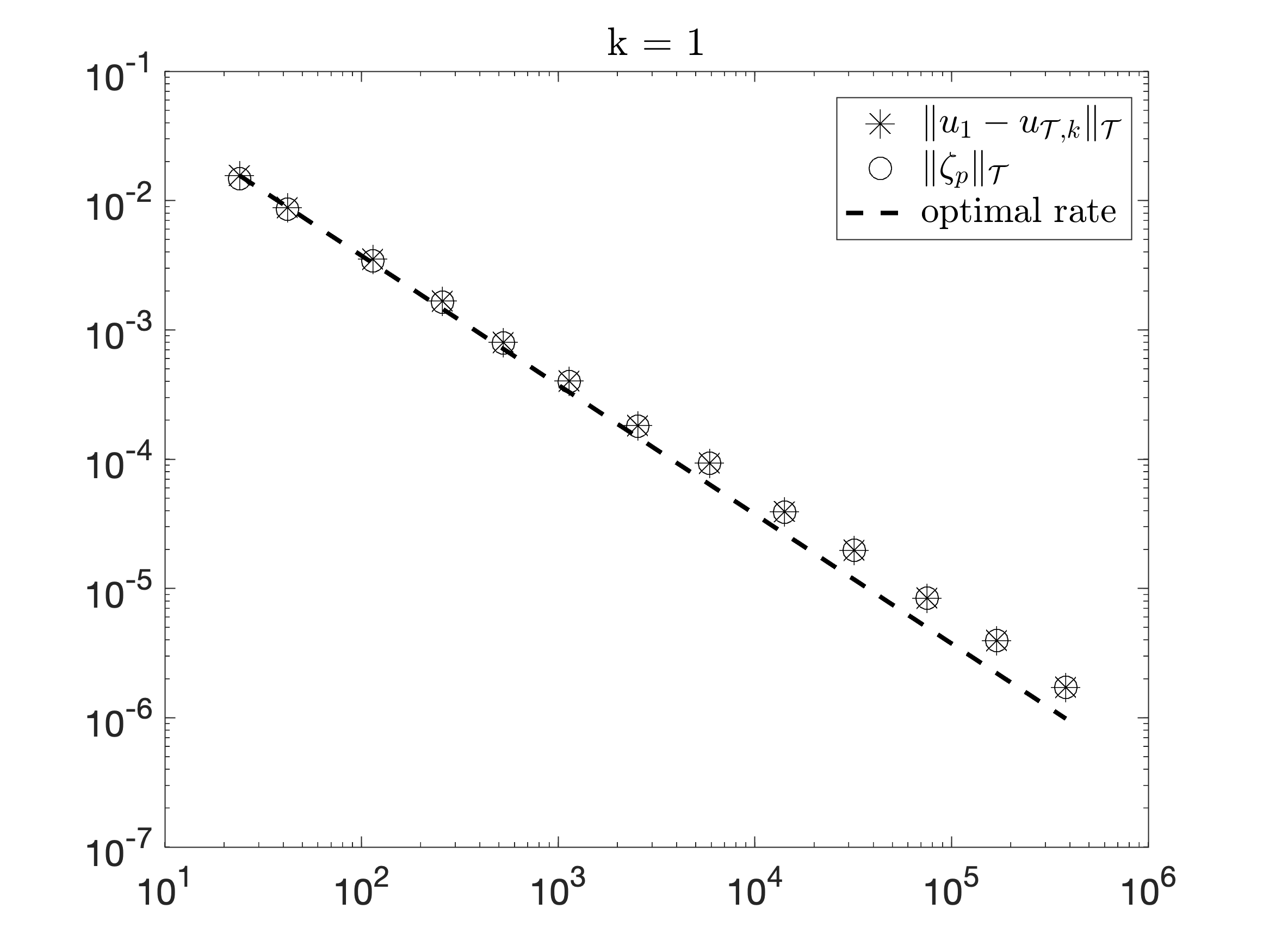}\\
	\includegraphics[width=.49\textwidth]{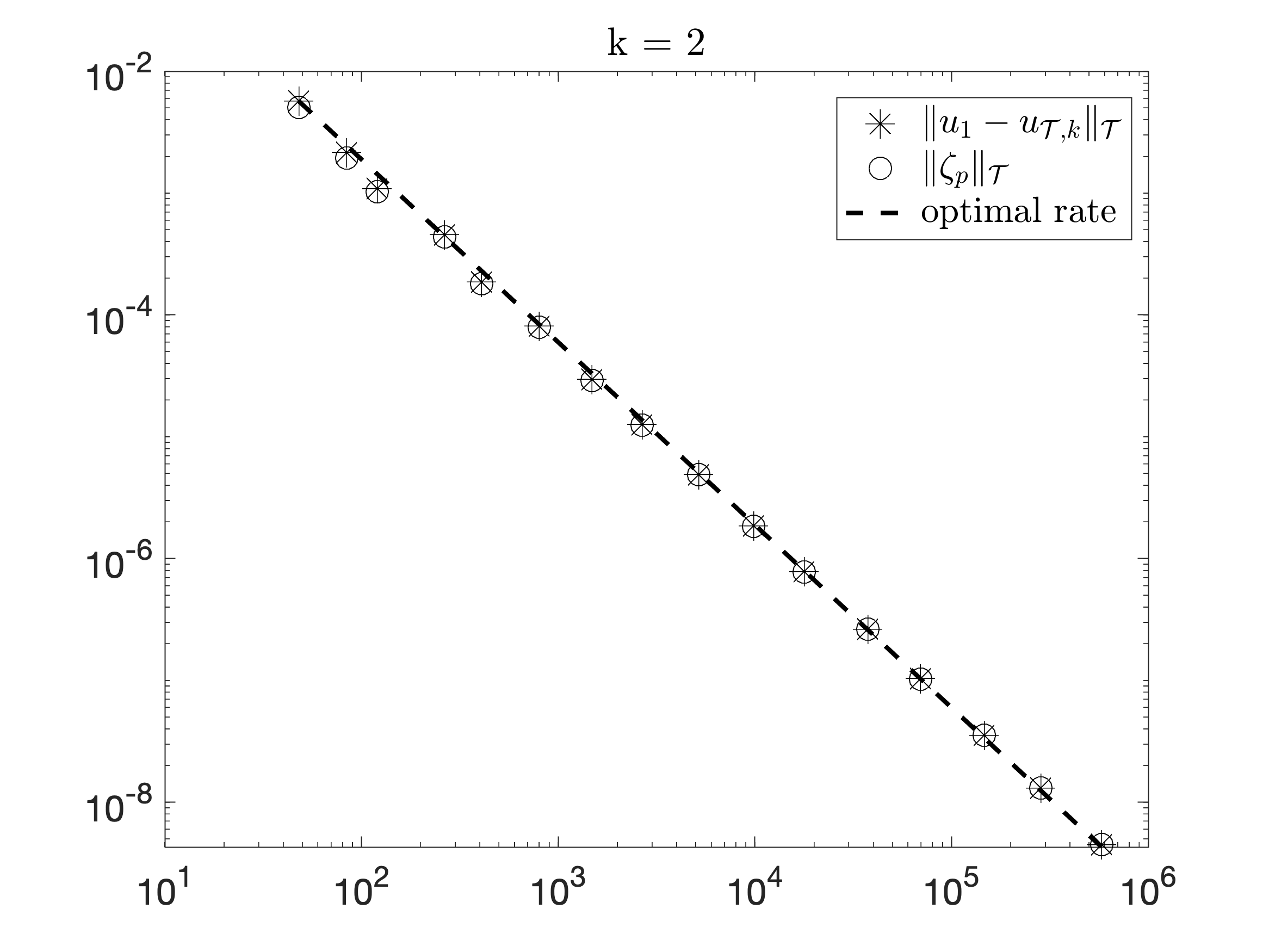}
    \includegraphics[width=.49\textwidth]{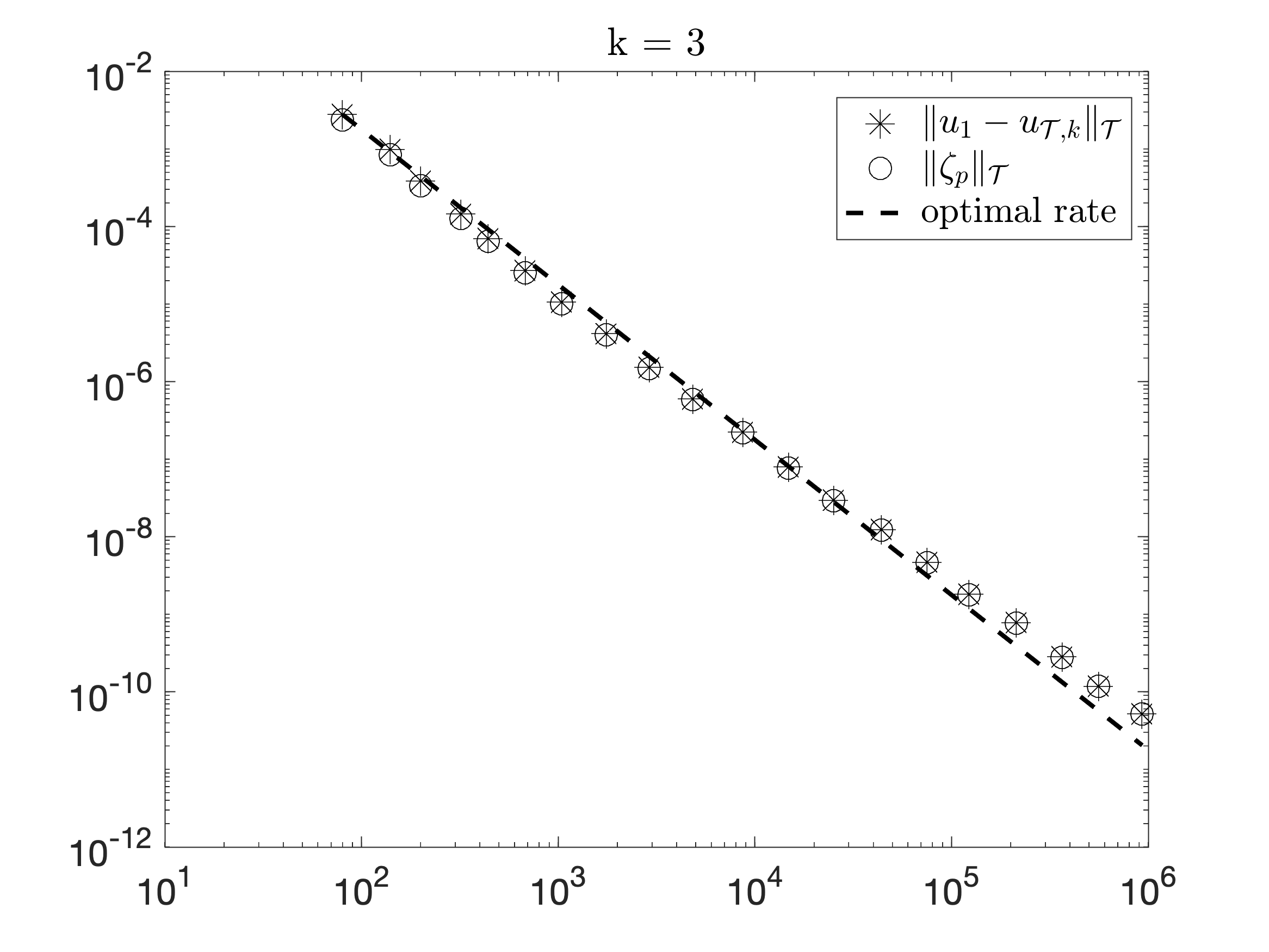}
	\caption{Test case \cref{eq: Test case 6} with singularity in $(0,0)$. Broken $H^1$ norm of the approximation error and of the $p$-estimator plotted against the theoretical optimal rate, for different values of the starting polynomial degree $k=0,1,2,3$, in a double logarithmic scale. \label{fig: TC6-H1 norm p-estimator}}
\end{figure}

\begin{figure}\centering
	\includegraphics[width=.49\textwidth]{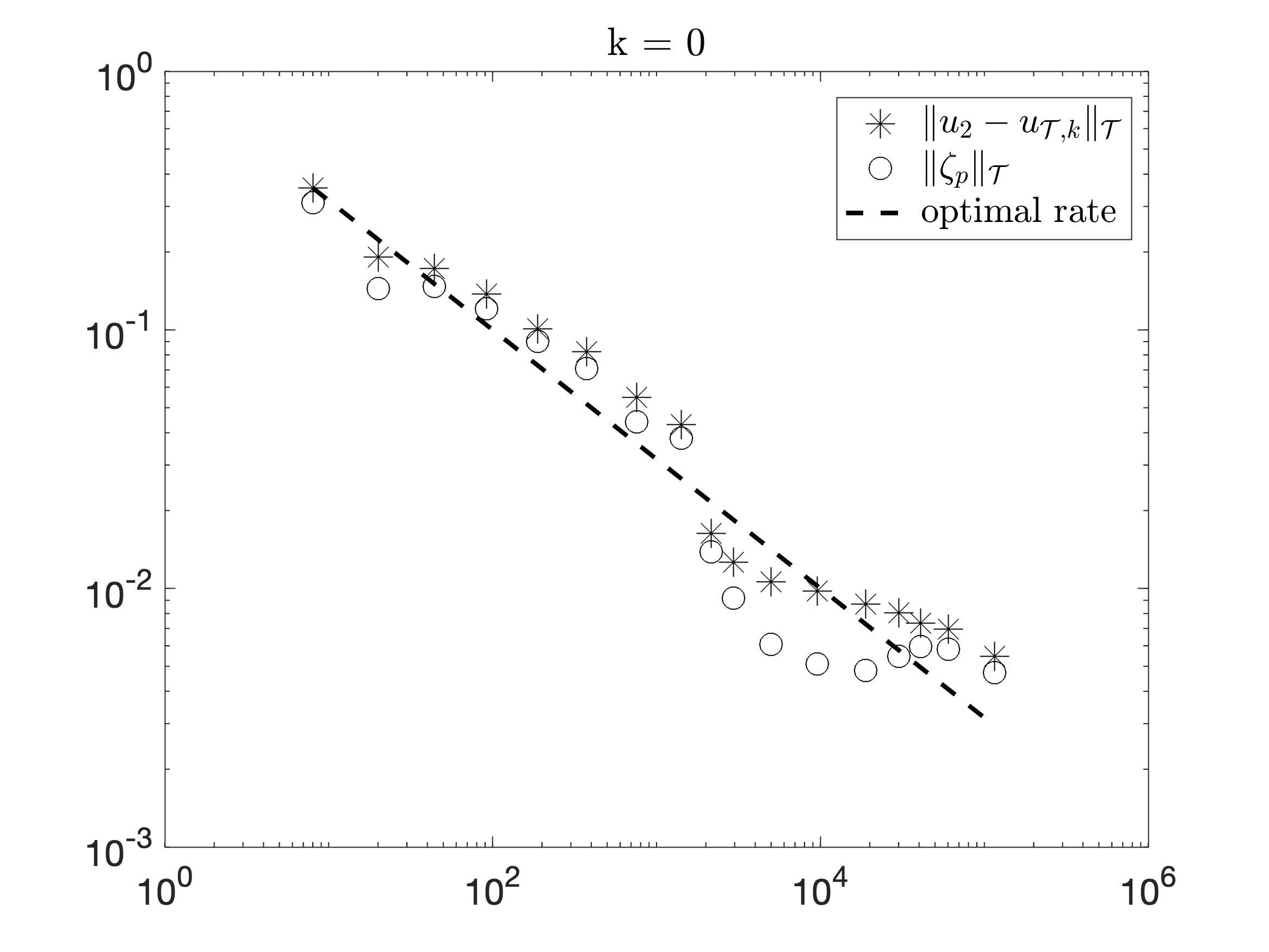}
	\includegraphics[width=.49\textwidth]{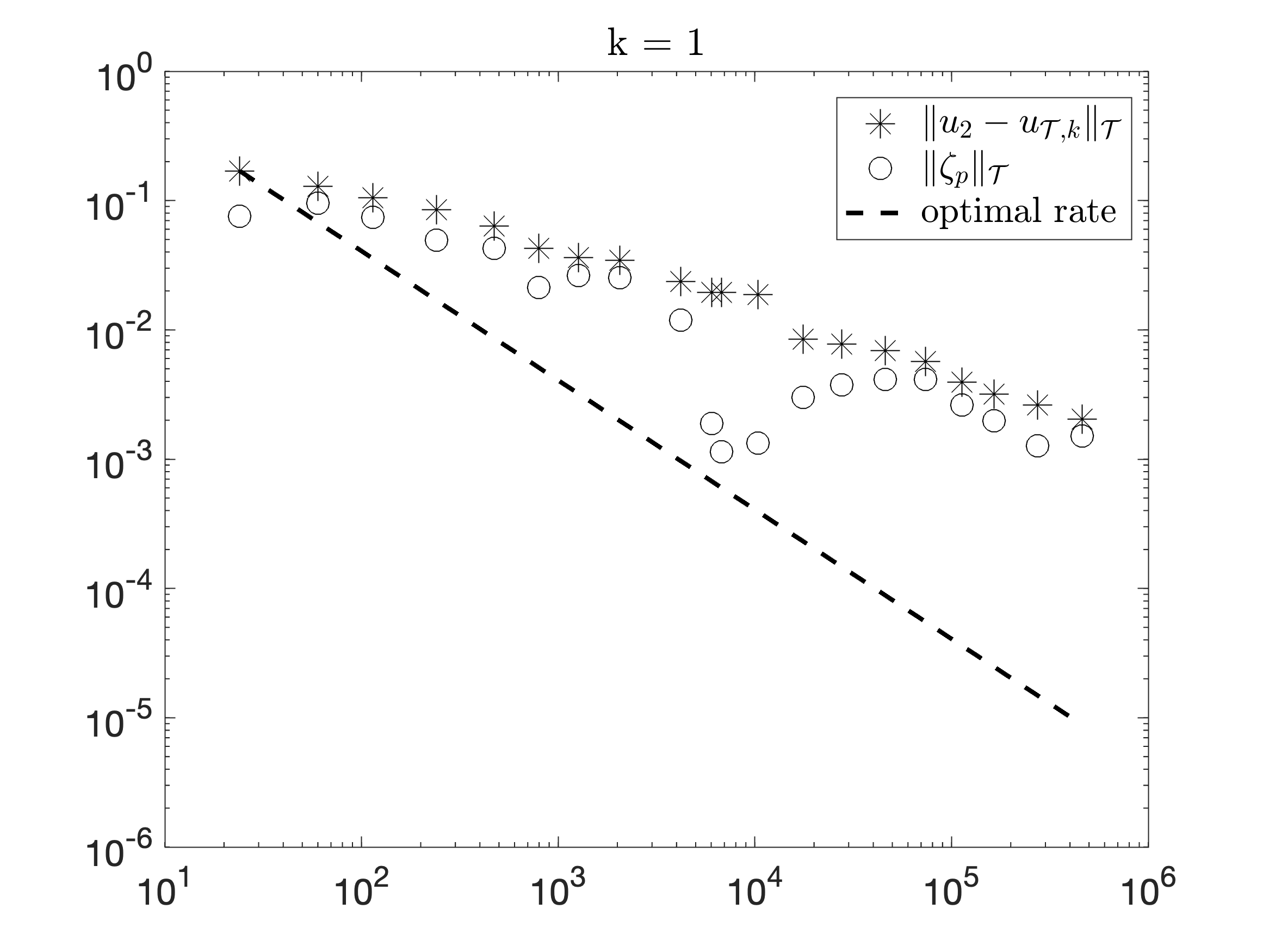}\\
	\includegraphics[width=.49\textwidth]{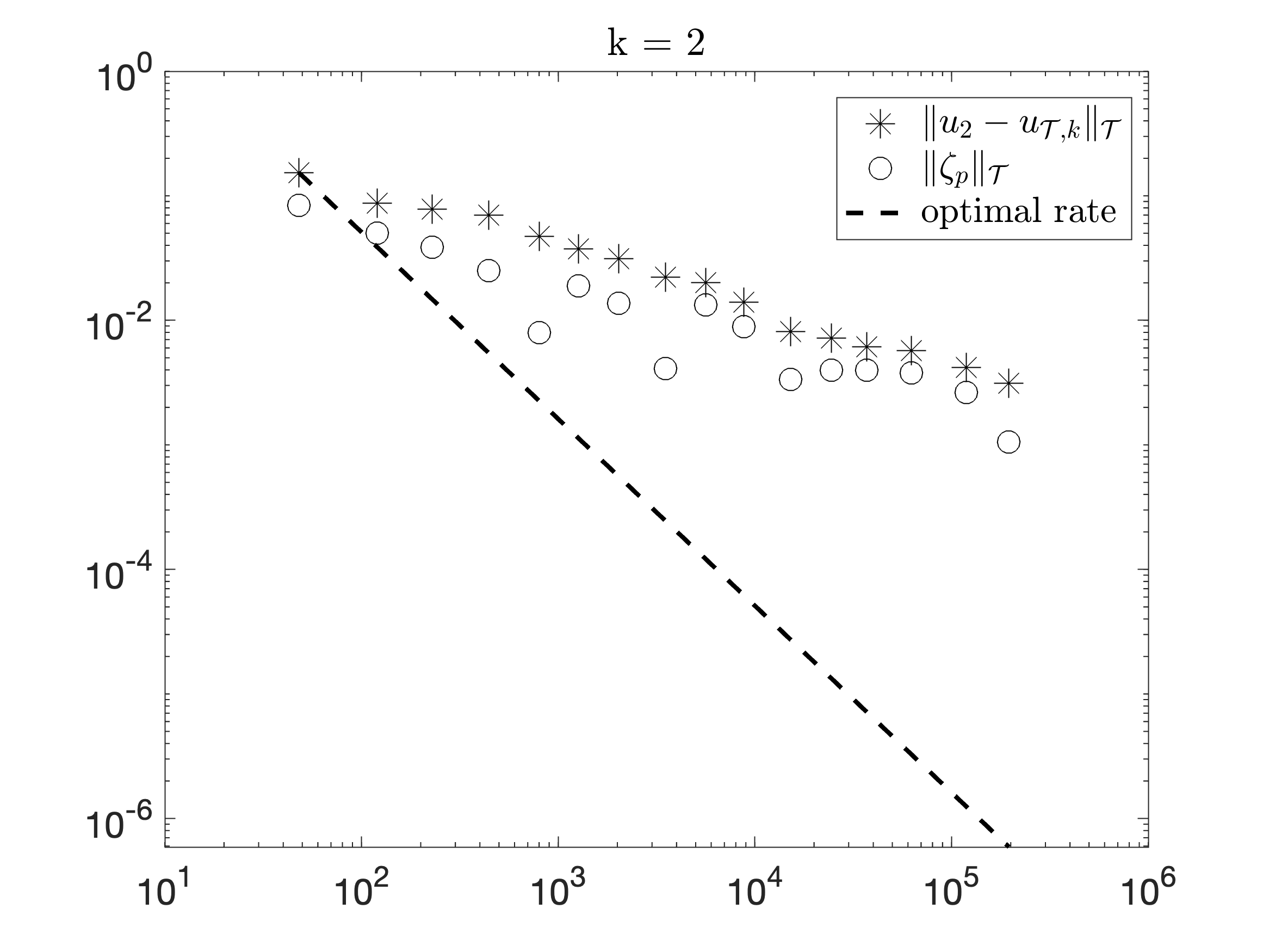}
    \includegraphics[width=.49\textwidth]{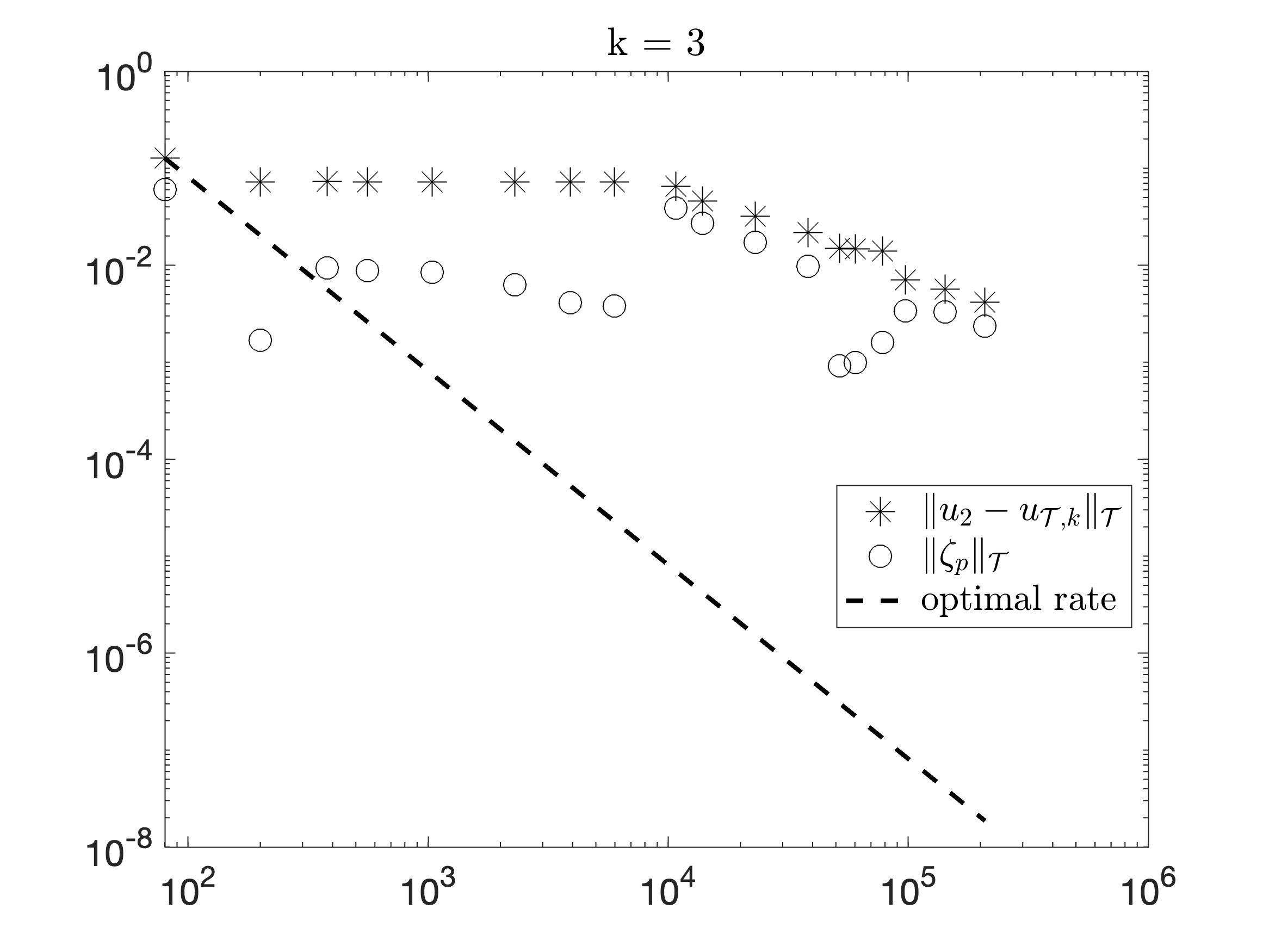}
	\caption{Test case \cref{eq: Test case 3} with discontinuity along the line $\mu=1/\sqrt{2}$. Broken $H^1$ norm of the approximation error and of the $p$-estimator plotted against the theoretical optimal rate, for different values of the starting polynomial degree $k=0,1,2,3$, in a double logarithmic scale. \label{fig: TC3-H1 norm p-estimator}}
\end{figure}

\subsection{Hierarchical \textit{h}-error estimator}
\label{sec: Hierarchical h error estimator}
Using once again the test cases \cref{eq: Test case 3} and \cref{eq: Test case 6}, we now keep $k^{\prime} \coloneqq k$ fixed and we construct $\T^{\p}$ by uniform refinement of $\T$, i.e., every element in $\T$ is subdivided in $4$ new elements with halved edge length. The error estimator is now
\begin{align}
    \label{eq:hierarchical_error_estimator2}
	\zeta_h \coloneqq u_{\T^{\p}, k} - u_{\T,k}.
\end{align}
Some comments are due for the computation of $u_{\T^{\p}, k}$, for which we use, as in definition \eqref{eq: SIP bilinear form} but with $\T$ replaced by $\T^{\p}$, the bilinear form $\aad:V_{\T^{\p},k}\times V_{\T^{\p},k}\to \mathbb{R}$. Since $V_{\T,k}$ is a subspace of $V_{\T^{\p},k}$, we require, similar to \cite{Karakashian_2003}, that $a_h$ is the restriction of $\aad$ to $V_{\T,k}$, in the sense that
\begin{equation}
    \label{eq: a_h as restriction of a_had}
    a_h(v,w) = \aad(v,w) \qquad \forall u,v\in V_{\T,k}.
\end{equation}
Comparing the penalty terms in $a_h$ and $\aad$ shows that the above restriction is fulfilled if $\alpha_F = 2\alpha^{\p}_F$. Since we need to ensure discrete stability of both bilinear forms, we choose $\alpha_F^{\p} \coloneqq 1/2 + C_{dt}(k)$.

\Cref{fig: TC6-H1 norm h-estimator} and \Cref{fig: TC3-H1 norm h-estimator} show the optimality of the estimator for the manufactured solution $u_1$ with point singularity defined in \cref{eq: Test case 6}, and sub-optimality in the case of discontinuous exact solutions \cref{eq: Test case 3}, except for $k=0$, where a similar comment as for the $p$-hierarchical estimator applies.
We note that in all cases, the estimator is close to the actual error. 
Moreover, as shown in Figure~\ref{fig: Comparison} we observe that the estimator is able to detect the line discontinuity present in $u_2$.
We note that the condition in \cref{eq: a_h as restriction of a_had} is not essential for the results shown in this section. In fact, similar results can be obtained by using $\alpha_F=\alpha_{F^{\p}}=1/2+C_{dt}(k)$. The condition \cref{eq: a_h as restriction of a_had} is required in the next section.

\begin{figure}\centering
	\includegraphics[width=.49\textwidth]{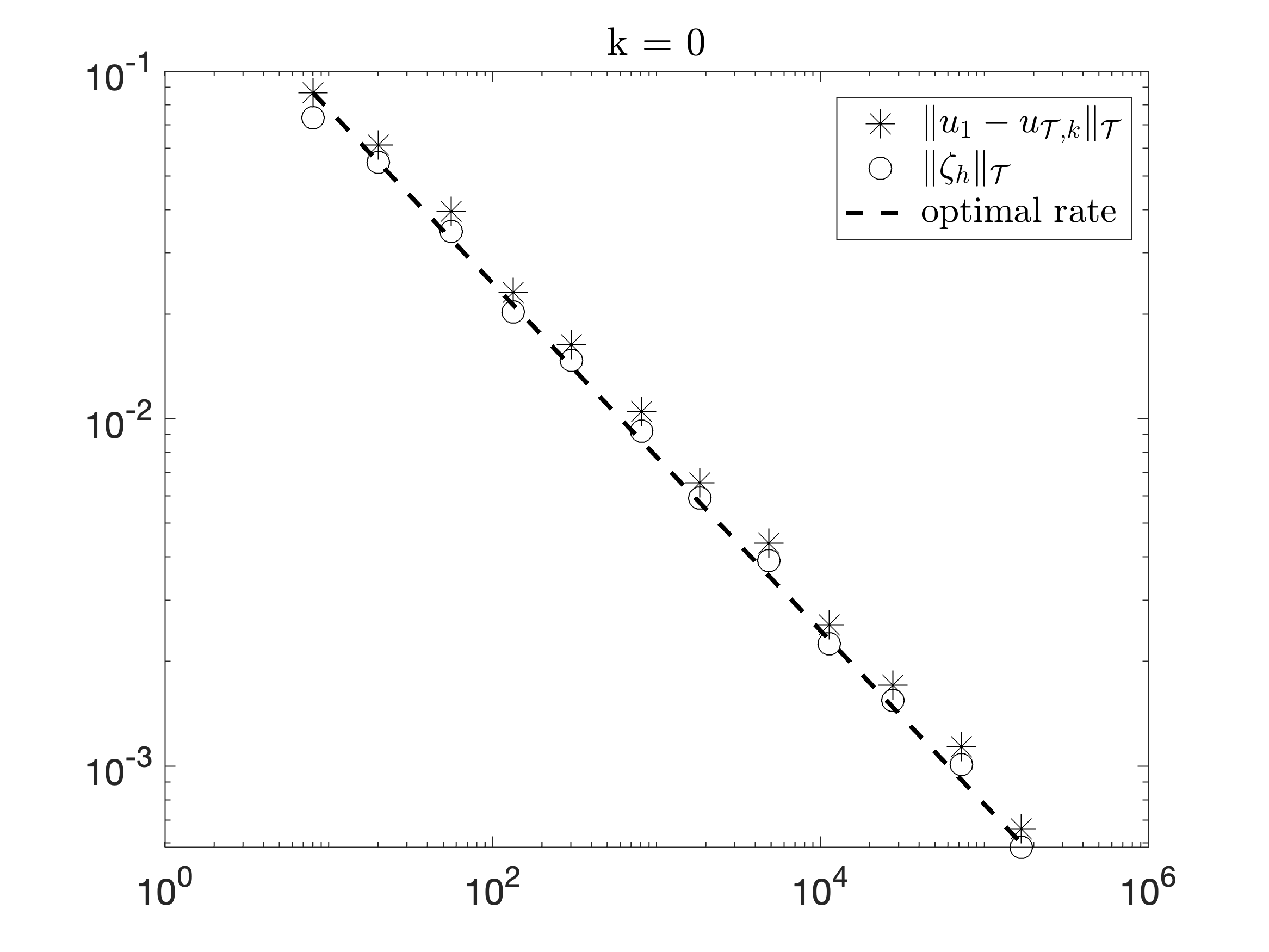}
	\includegraphics[width=.49\textwidth]{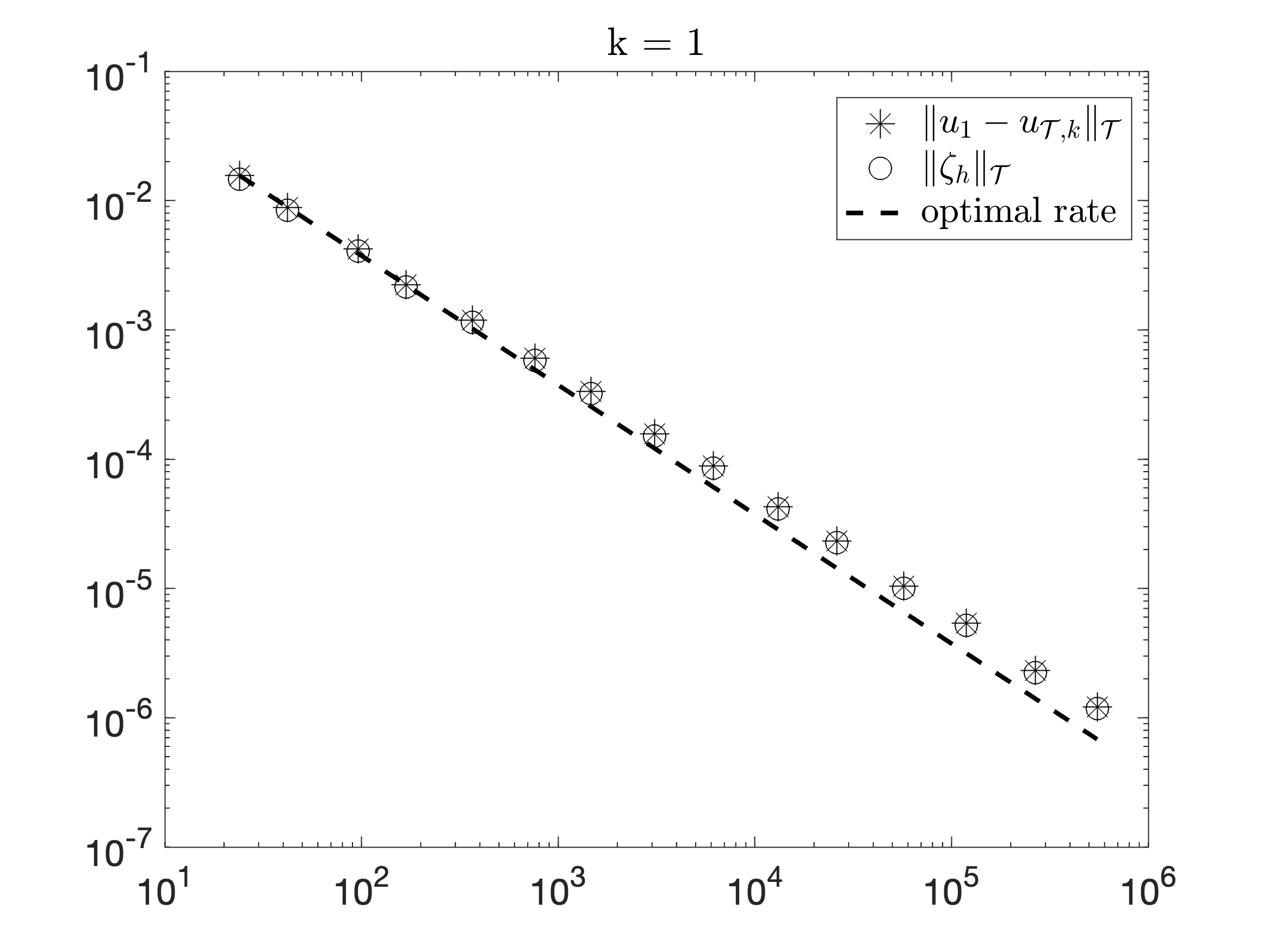}\\
	\includegraphics[width=.49\textwidth]{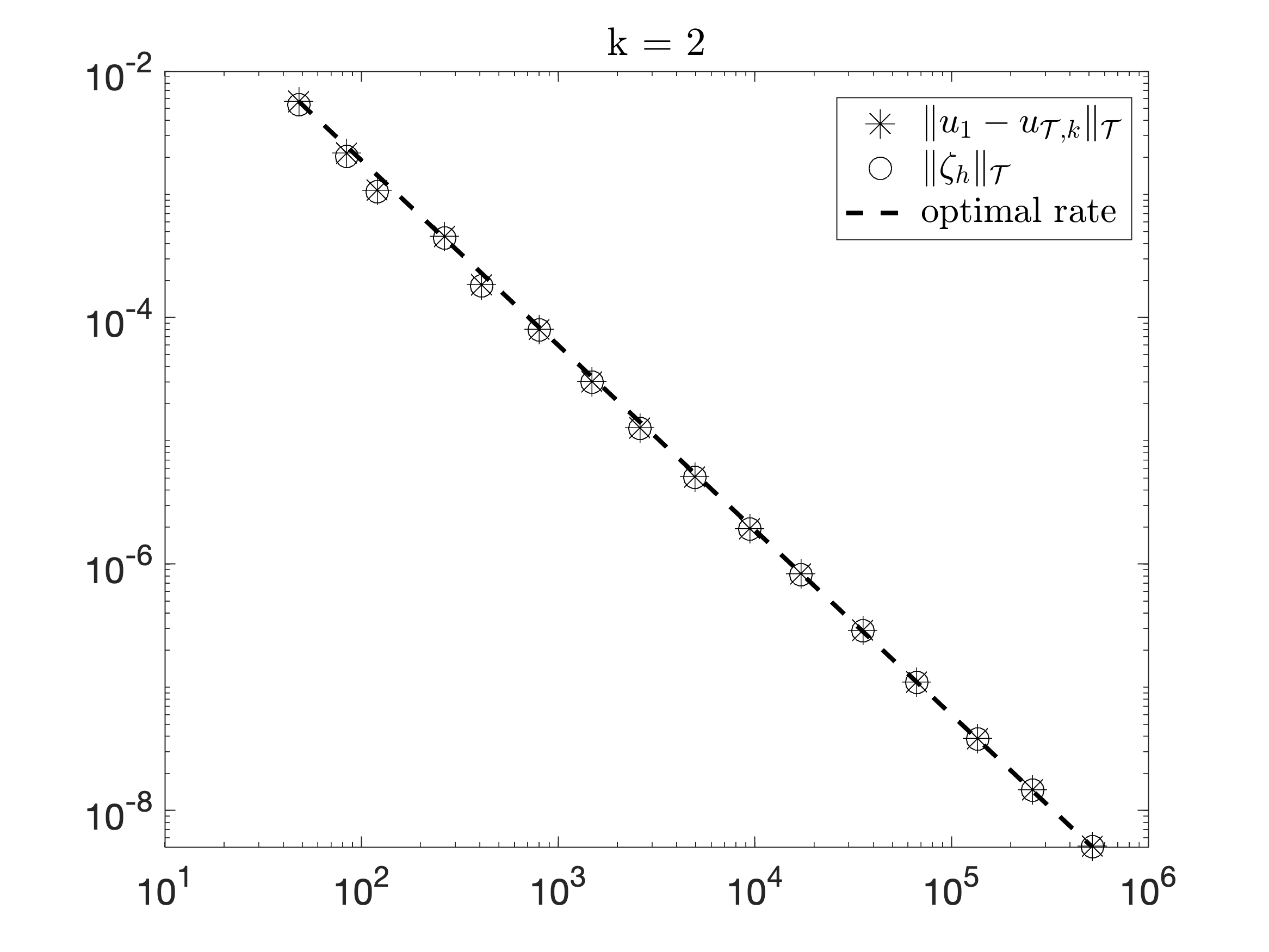}
    \includegraphics[width=.49\textwidth]{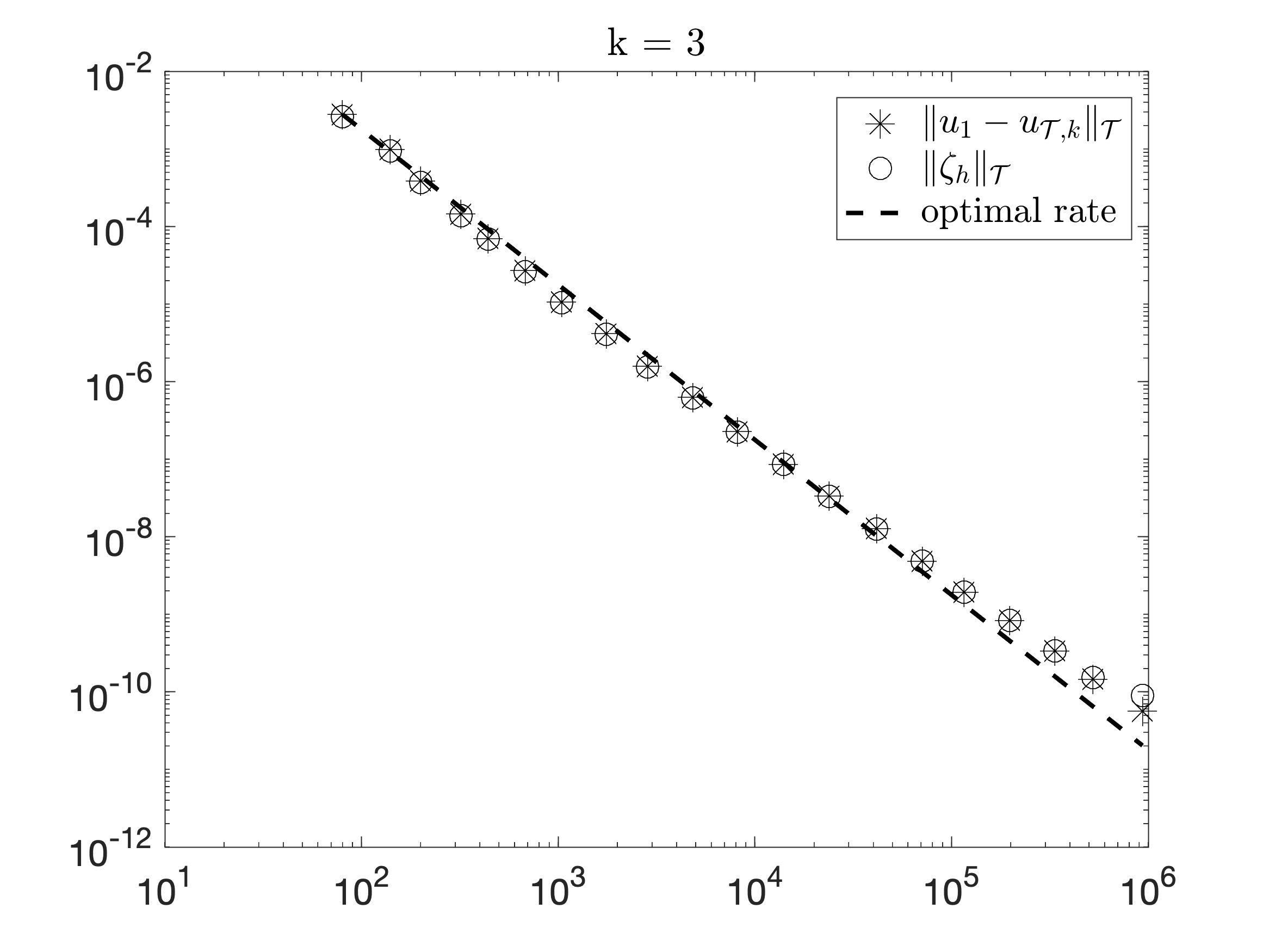}
	\caption{Test case \cref{eq: Test case 6} with singularity in $(0,0)$. Broken $H^1$ norm of the approximation error and of the $h$-estimator plotted against the theoretical optimal rate, for different values of the  polynomial degree $k=0,1,2,3$, in a double logarithmic scale. \label{fig: TC6-H1 norm h-estimator}}
\end{figure}

\begin{figure}\centering
	\includegraphics[width=.49\textwidth]{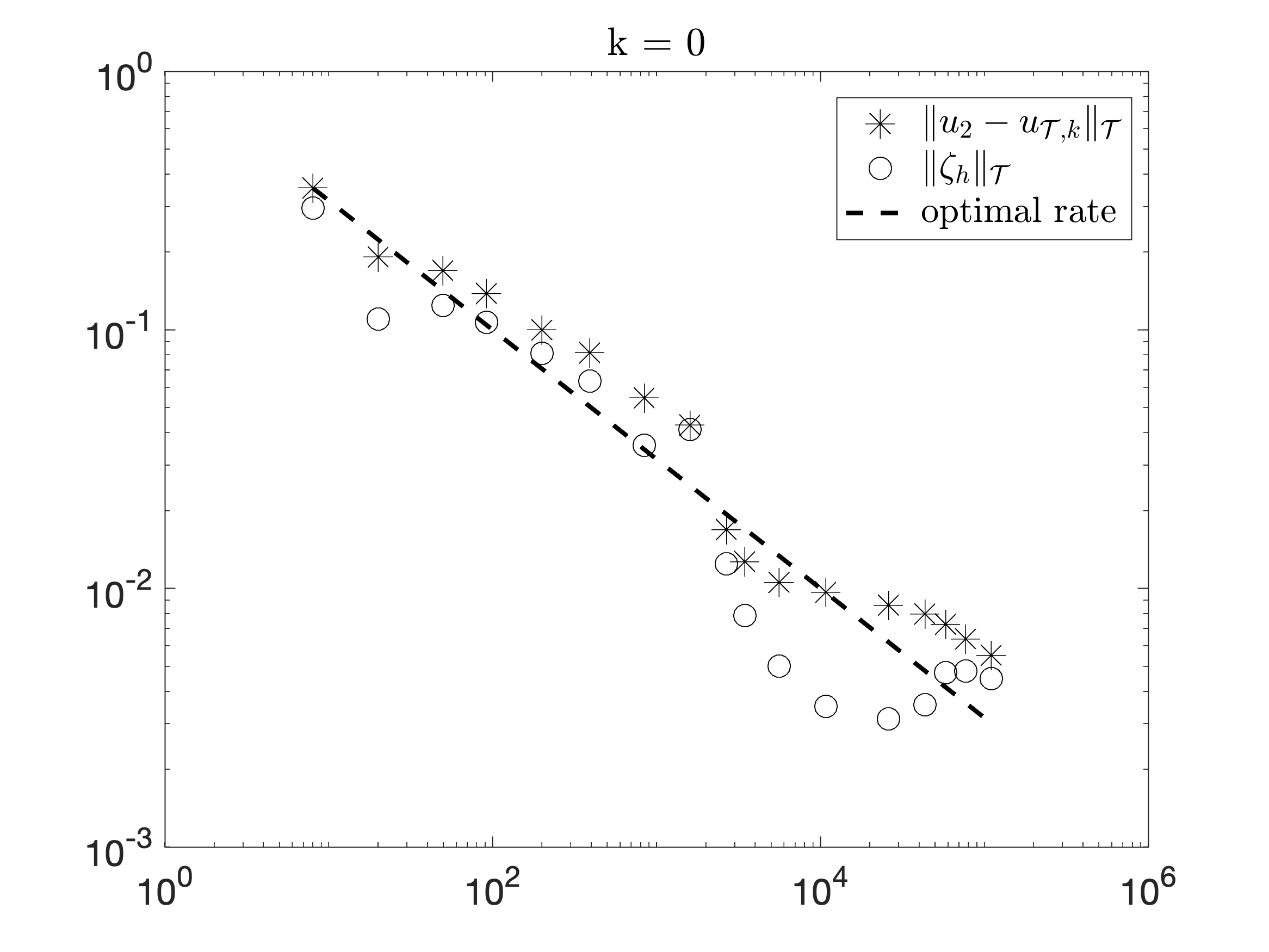}
	\includegraphics[width=.49\textwidth]{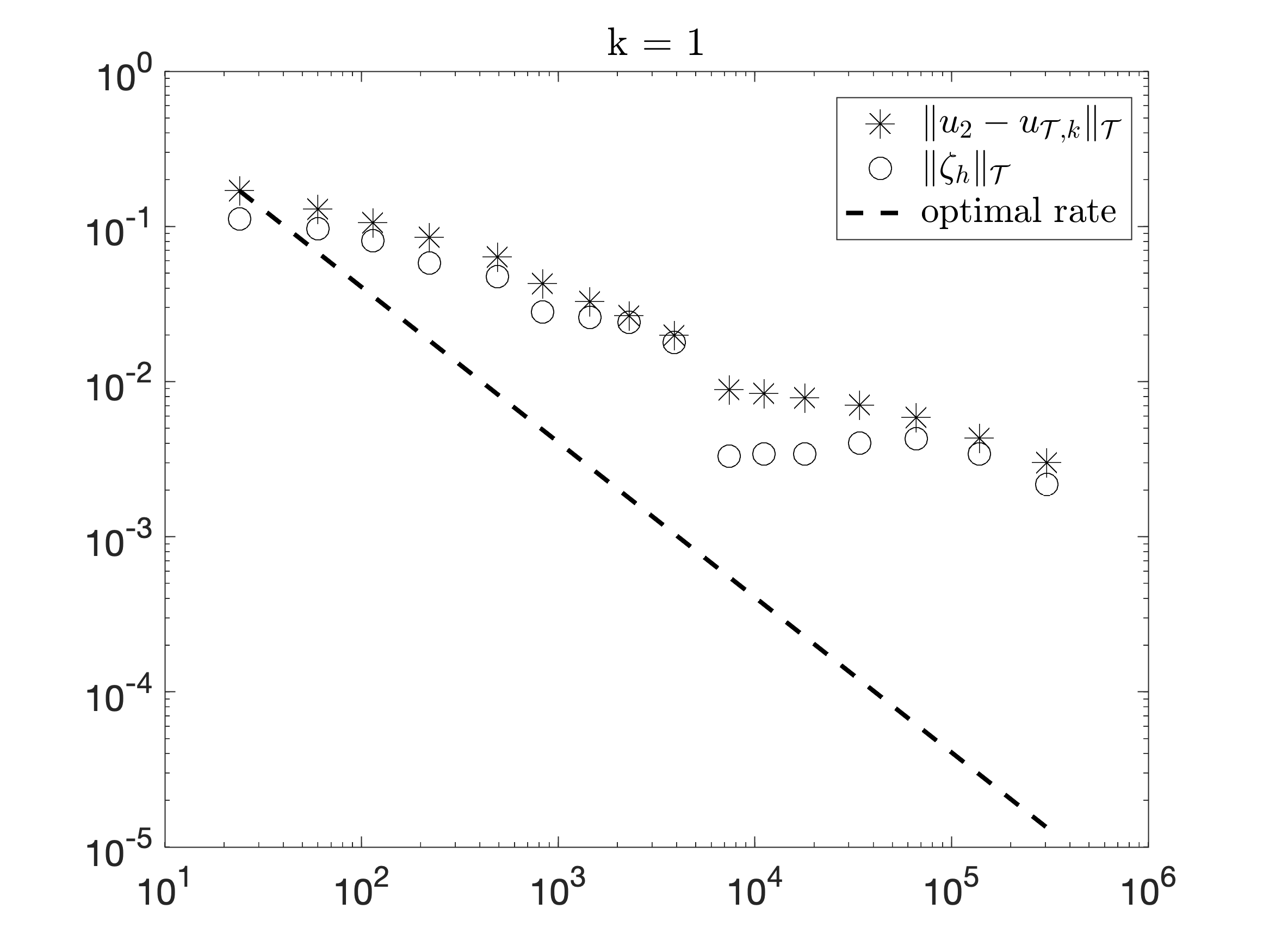}\\
	\includegraphics[width=.49\textwidth]{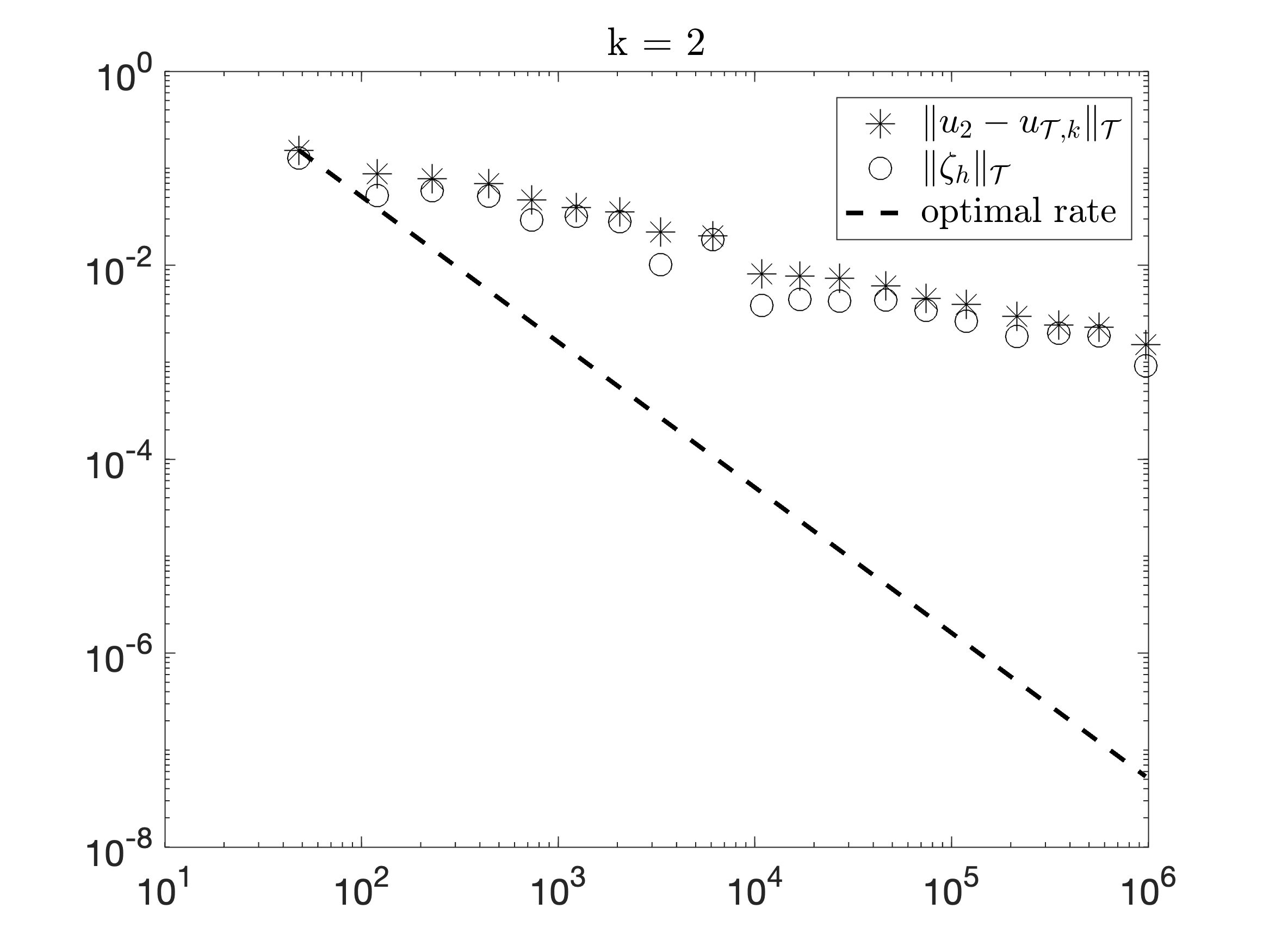}
    \includegraphics[width=.49\textwidth]{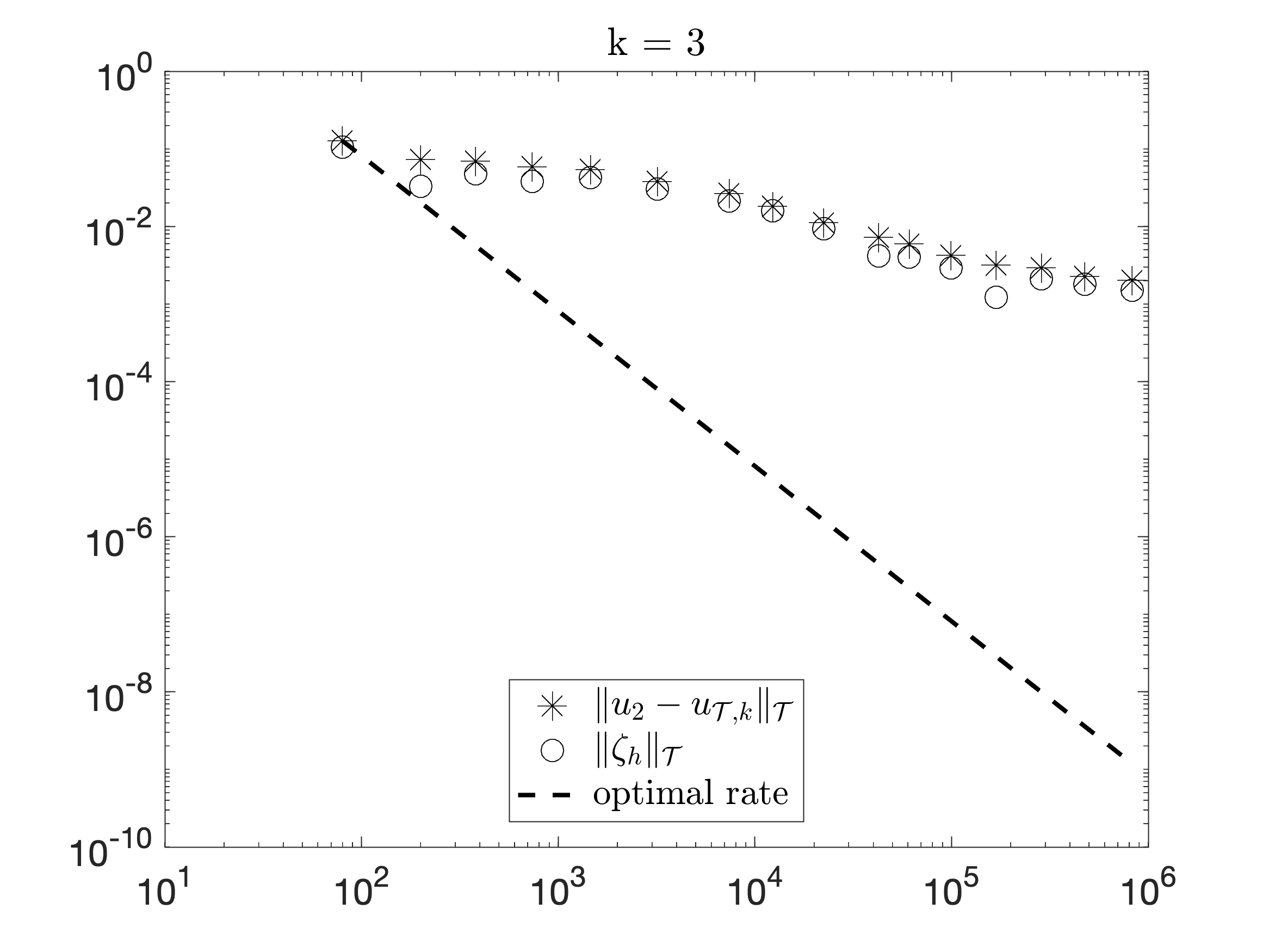}
	\caption{Test case \cref{eq: Test case 3} with line discontinuity. Broken $H^1$ norm of the approximation error and of the $h$-estimator plotted against the theoretical optimal rate, for different values of the polynomial degree $k=0,1,2,3$, in a double logarithmic scale. \label{fig: TC3-H1 norm h-estimator}}
\end{figure}

\begin{figure}\centering
	\includegraphics[width=.49\textwidth]{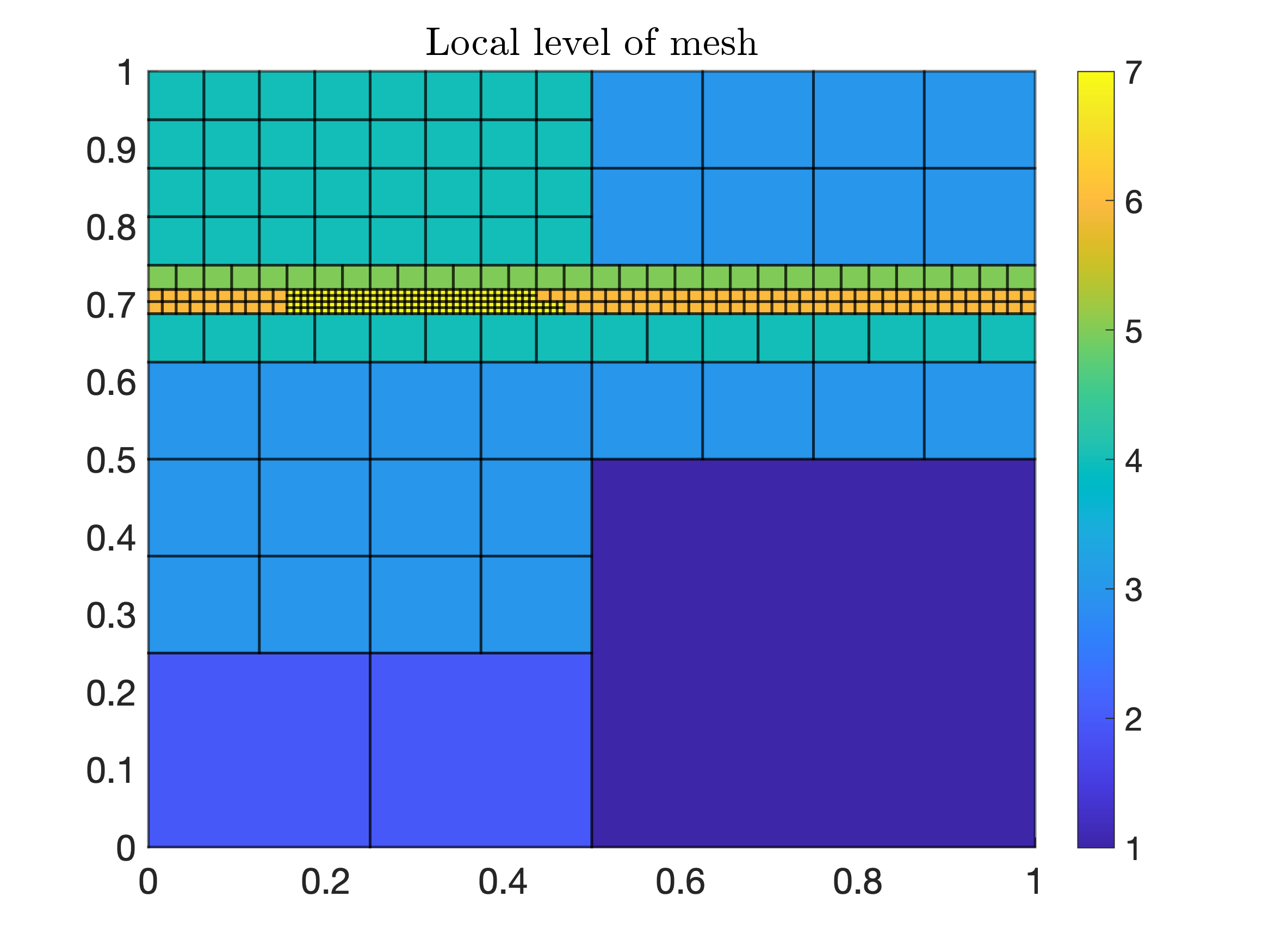}
	\includegraphics[width=.49\textwidth]{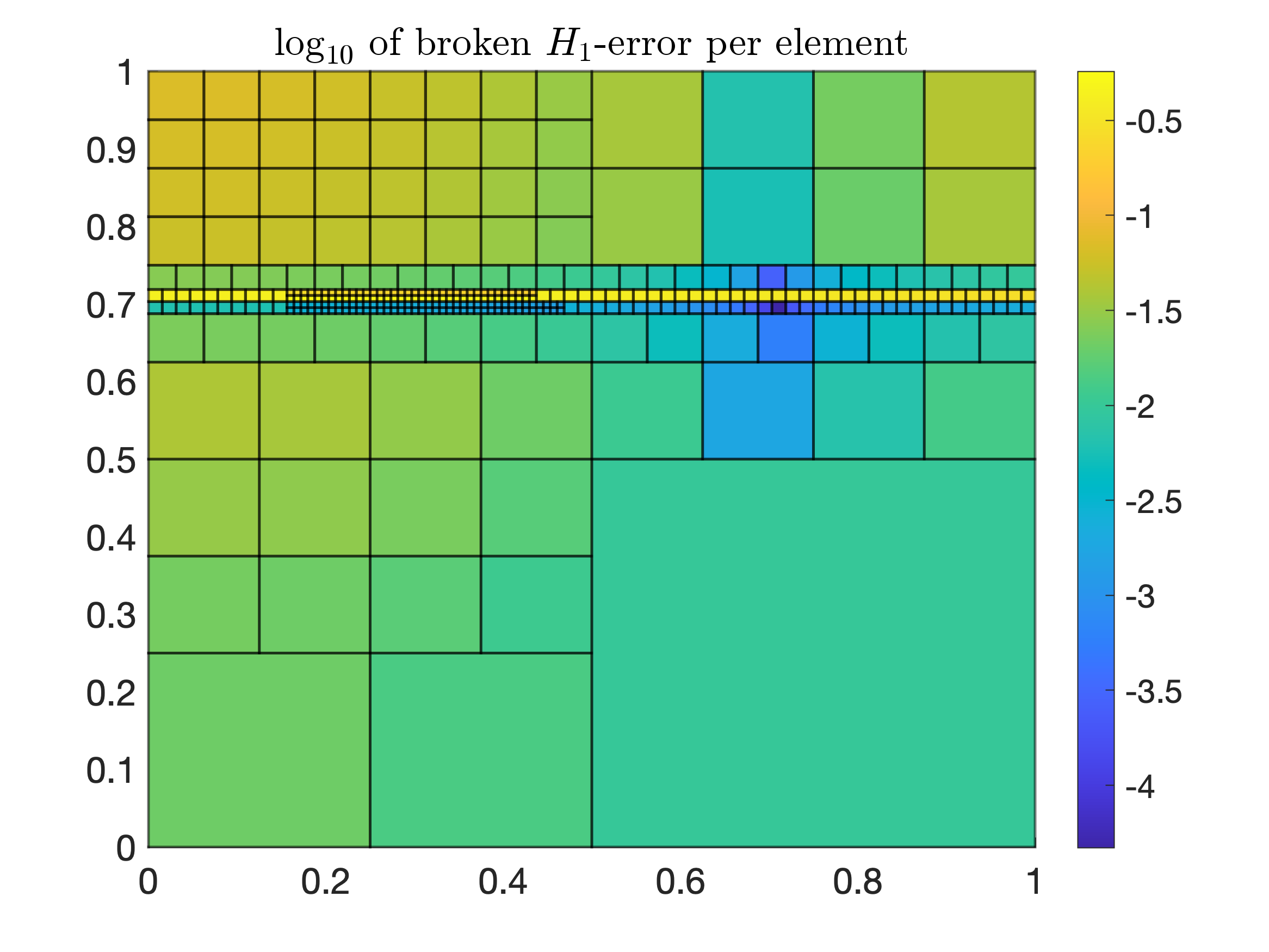}	
	\caption{Non-smooth test case \cref{eq: Test case 3}. Left: Locally refined mesh with local mesh sizes varying from $1/2$ to $1/2^7$ for $N = 349$ elements obtained using the error indicator $\zeta_h$ defined in \cref{eq:hierarchical_error_estimator2}. Right: Broken $H^1$-error for the grid shown left.\label{fig: Comparison}}
\end{figure}

\subsection{Error estimator based on the solution of local problems}
\label{sec: Error estimator based on the solution of local problems}

Since the computation of the global error estimators $\zeta$ presented in \Cref{sec: Hierarchical p error estimator} and \Cref{sec: Hierarchical h error estimator} is in general expensive, we investigate also an error estimator based on the solution of local problems, as presented in \cite{Karakashian_2001, Karakashian_2003} for corresponding elliptic problems.
In this approach, the computed solution $u_{\T, k}$ is understood as the coarse-mesh approximation to some function, here $u_{\T^{\p},k}$. Instead of using $\zeta=u_{\T,k}-u_{\T^\p,k}$ as before, local approximations $\eta_K$ are computed element-wise.

For each $K\in \T$, we consider the local space $V_{\T^{\p},k}(K)$ obtained by restricting functions in $V_{\T^{\p},k}$ to $K$. By extending functions in $V_{\T^{\p},k}(K)$ to zero outside of $K$, $V_{\T^{\p},k}(K)$ becomes a subspace of $V_{\T^{\p},k}$. Indeed,
\begin{equation}
    \label{eq: Vad as direct sum}
    V_{\T^{\p},k} = \bigoplus_{K\in \T} V_{\T^{\p},k}(K),
\end{equation}
where $\oplus_{K\in\T}$ denotes the direct sum of subspaces. On $V_{\T^{\p},k}(K) \times V_{\T^{\p},k}(K)$ we introduce the (local) bilinear form $\aad^K$ as the restriction of $\aad$ to $\Vad(K) \times \Vad(K)$. This bilinear form inherits continuity and coercivity from $\aad$. Using \Cref{lem:discrete_stability} there holds
\begin{equation}
    \label{eq: coercivity of aad^K}
    \aad^K(v,v) \geq \frac{1}{2}\norm{v}^2_{\Vad,K},
\end{equation}
where $\normc_{\Vad,K}$ is the restriction of $\normc_{\Vad}$ to $\Vad(K)$. Here, $\normc_{\Vad}$ is defined according to \cref{eq: norm Vh} as
\begin{equation}
    \|v\|_{\Vad}^2  = a_{h^{\p}}^e(v,v) + \sum_{F\in \mathcal F^{vi}_{\had}} \frac{H^{\p}_F}{C_{dt}(\kad)} \norm{\avg{\fluxh{v}}}_{L^2(F;\mu)}^2,\quad \forall v\in \Vad.
\end{equation}
Let $\uad$ be the discontinuous Galerkin approximation of $u$ on $\Vad$, i.e. 
\begin{equation}
    \label{eq: Galerkin approximation on Vad}
    \aad(\uad, v) = \ip{f}{v} + \langle g, v \rangle \qquad \forall v\in \Vad.
\end{equation}
At this point we observe that \eqref{eq:DG}, \eqref{eq: a_h as restriction of a_had} and \eqref{eq: Galerkin approximation on Vad} imply, for all $v\in V_h$,
\begin{equation}
    \label{eq: orthogonality of zeta to functions in Vh}
    \begin{aligned}
    \aad(\uad-u_{\T,k}, v) & = \aad(\uad, v) - \aad(u_{\T, k},v) \\ 
                      & = \aad(\uad, v) - a_h(u_{\T, k},v) \\
                      & = \ip{f}{v} + \langle g, v \rangle - \ip{f}{v} - \langle g, v \rangle = 0.
\end{aligned}
\end{equation}
Eventually we introduce the functions $\{\eta_K \in \Vad(K) | K\in\T_h\}$ as solutions to the local problems 
\begin{equation}
    \label{eq: local problem}
    \aad^K(\eta_K, v) = \aad(u_{\T^{\p}, k}-u_{\T, k}, v) = \ip{f}{v} + \langle g, v \rangle - \aad(u_{\T, k},v) \quad \forall v \in \Vad(K).
\end{equation}
Each $\eta_K$ can be computed independently of each other. The function $\eta = \sum_{K\in \T} \eta_K \in \Vad$ then may serve as an approximation to the estimator $\zeta = u_{\T^{\p},k}-u_{\T,k}$ on $\Vad$, as in \cite{Karakashian_2003} for elliptic problems. Following \cite[Theorem 4.1]{Karakashian_2003}, we can prove a lower bound for $\zeta$ in terms of the local error estimator $\eta$, i.e.
\begin{lemma}\label{lem:local_est}
    We have that
    \begin{equation}
        \label{eq: Equivalence between zeta and eta}
         \norm{\eta}_{\Vad} \leq 2(C_{dt}(k)+\alpha_F^{\p})\norm{\zeta}_{\Vad}.
    \end{equation}
\end{lemma}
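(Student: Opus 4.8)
The plan is to follow the efficiency argument of \cite[Theorem~4.1]{Karakashian_2003}, assembling four ingredients that are already in place: the direct sum decomposition \eqref{eq: Vad as direct sum}, the local coercivity \eqref{eq: coercivity of aad^K}, the defining local problems \eqref{eq: local problem}, and the boundedness of $\aad$. Because in this section $\sigma_s=0$, the form $\aad$ carries no nonlocal scattering coupling, so the volume contributions of $a^e_{\had}$ are genuinely element-wise; this is precisely what makes the decomposition \eqref{eq: Vad as direct sum} compatible with the energy norm.

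First I would use that $\eta=\sum_{K\in\T}\eta_K$ with summands $\eta_K\in\Vad(K)$ having pairwise disjoint supports, so that the $\Vad$-norm splits as
\[
\norm{\eta}_{\Vad}^2=\sum_{K\in\T}\norm{\eta_K}_{\Vad,K}^2 .
\]
By the local coercivity \eqref{eq: coercivity of aad^K}, each summand is controlled by $\norm{\eta_K}_{\Vad,K}^2\le 2\,\aad^K(\eta_K,\eta_K)$. Testing the local problem \eqref{eq: local problem} with the admissible choice $v=\eta_K\in\Vad(K)$ gives the identity $\aad^K(\eta_K,\eta_K)=\aad(\zeta,\eta_K)$, with $\zeta=u_{\T^{\p},k}-u_{\T,k}$. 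Summing over $K$ and using linearity of $\aad$ in its second argument then yields
\[
\norm{\eta}_{\Vad}^2\le 2\sum_{K\in\T}\aad(\zeta,\eta_K)=2\,\aad(\zeta,\eta).
\]

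To close, I would invoke the boundedness of $\aad$ on the refined partition $\T^{\p}$, i.e.\ \Cref{cor:discrete_boundedness} applied on $\T^{\p}$ with $k'=k$, which gives the constant $C_{dt}(k)+\alpha_F^{\p}$ and hence $\aad(\zeta,\eta)\le (C_{dt}(k)+\alpha_F^{\p})\,\norm{\zeta}_{\Vad}\norm{\eta}_{\Vad}$. Combining with the previous display produces $\norm{\eta}_{\Vad}^2\le 2(C_{dt}(k)+\alpha_F^{\p})\,\norm{\zeta}_{\Vad}\norm{\eta}_{\Vad}$, and dividing by $\norm{\eta}_{\Vad}$ (the case $\eta=0$ being trivial) gives the asserted bound \eqref{eq: Equivalence between zeta and eta}.

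The step I expect to be the main obstacle is the norm splitting in the first display. While the volume and boundary parts of $a^e_{\had}$ are element-local, the face contributions to the $\Vad$-norm that sit on faces along the boundary $\partial K$ between two coarse elements couple $\eta_K$ with the neighbouring $\eta_{K'}$, so orthogonality of the decomposition \eqref{eq: Vad as direct sum} in the $\Vad$-inner product is not automatic; it must be argued carefully from how each such face is assigned to the local norms $\norm{\cdot}_{\Vad,K}$. The remaining steps are a routine chain of local coercivity, the definition of the local problems, and a single application of boundedness.
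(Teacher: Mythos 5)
Your argument is correct and coincides with the paper's own proof: you test the local problems \eqref{eq: local problem} with $v=\eta_K$, sum over $K$, control one side by the local coercivity \eqref{eq: coercivity of aad^K} together with the splitting of the norm, bound the other side by discrete boundedness (\Cref{cor:discrete_boundedness}) applied to $\aad$, and divide by $\norm{\eta}_{\Vad}$. The only cosmetic difference is that the paper needs (and asserts) merely the one-sided estimate $\norm{\eta}_{\Vad}^2\leq\sum_{K\in\T}\norm{\eta_K}_{\Vad,K}^2$ rather than the exact orthogonality of the decomposition that you single out as the main obstacle, so that issue reduces to checking how the inter-element face contributions are apportioned to the local norms $\norm{\cdot}_{\Vad,K}$ of the zero-extended $\eta_K$.
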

\begin{proof}
    We first rewrite \eqref{eq: local problem} in terms of the estimator:
    \begin{equation}
        \label{eq: Local problems in terms of zeta}
        \aad^K(\eta_K, v) = \aad(\zeta, v) \quad \forall v\in \Vad(K).
    \end{equation}
    Plugging $v = \eta_K \in \Vad(K)$ into the previous equation and recalling that $\eta = \sum_K \eta_K$, we have
    \begin{equation}
        \label{eq: boundedness of aad_K{zeta,eta}}
        \sum_{K\in \T} \aad^K(\eta_K, \eta_K) = \aad(\zeta,\eta) \leq (C_{dt}(\kad)+\alpha_F^{\p}) \norm{\zeta}_{\Vad}\norm{\eta}_{\Vad},
    \end{equation}
    where we used \Cref{cor:discrete_boundedness} in the last step.
    Coercivity of $\aad^K$, expressed in \eqref{eq: coercivity of aad^K}, and \cref{eq: Vad as direct sum} entail
    \begin{equation}
        \label{eq: application of coercivity of aad^K}
        \sum_{K\in \T} \aad^K(\eta_K, \eta_K) \geq \dfrac{1}{2} \sum_{K\in\T}\norm{\eta_K}^2_{\Vad,K} \geq \dfrac{1}{2} \norm{\eta}^2_{\Vad}.
    \end{equation}
    Combining \eqref{eq: boundedness of aad_K{zeta,eta}} and \eqref{eq: application of coercivity of aad^K} we have eventually
    \begin{equation}
        \dfrac{1}{2} \norm{\eta}^2_{\Vad} \leq \sum_{K\in \T} \aad^K(\eta_K, \eta_K) \leq (C_{dt}(\kad)+\alpha_F^{\p}) \norm{\zeta}_{\Vad}\norm{\eta}_{\Vad},
    \end{equation}
    which concludes the proof.
 \end{proof}   
Due to the lack of appropriate interpolation operators for functions in the space $V$, one cannot adapt the proofs given in \cite{Karakashian_2003} in a straight-forward fashion to show a bound of $\zeta$ in terms of the local contributions $\eta$. In fact, some preliminary numerical tests, based on the broken $H^1$-norm \cref{eq: broken H1 norm}, suggest that the desired equivalence of $\zeta$ and $\eta$ might not be true.
In a similar spirit, our preliminary numerical tests indicate that standard residual error estimators are either not reliable or efficient, which, again, may be explained by the lack of suitable interpolation error estimates required to obtain the correct scaling in terms of the local mesh size of the different local contributions, cf., \cite[Section~5.6]{diPietroErn} or \cite{AinsworthOden2000,Verfuerth2013}.
Therefore, we investigate in the next section another error estimator based on local averages.

\subsection{Error estimator based on averaging the approximate solution}

In the context of a posteriori error estimation and adaptive mesh refinement, ZZ-error estimators named after Zienkiewicz
and Zhu \cite{ZienkiewiczZhu1992} are widely used in practice. Compared to the previously mentioned hierarchical error estimators, their major advantage is the fact that no further mesh nor a further solution is required. 
We consider the case $\kz=\kmu=0$.
In order to obtain a reliable error estimator, one simply takes a discontinuous $u_h \in V_h$ and approximates it by some continuous piecewise linear polynomial $\tilde u_h$ by a post-processing step.
In the presence of a geometrically conforming triangulation, such a continuous piecewise polynomials $\tilde u_h$ can be described as a linear combination of the well-known Lagrange nodal basis functions. However, our approximation involves hanging nodes and we therefore restrict the construction to the set of regular nodes $\mathcal N_h$, i.e. 
\begin{equation*}
\mathcal N_h =
\{
\nu \text{ node in } \mathcal T_h \ : \
\nu \in K \text{ implies }\nu \text{ vertex of }K
\ \forall K \in \mathcal T_h 
\} \ .
\end{equation*}
If a regular node $\nu \in \mathcal N_h$ is shared by four quadrilaterals $K_1,...,K_4$ of the same area, the idea is to set the value of a continuous polynomial to 
$\frac 1 4 \left( 
{{u_h}_|}_{K_1} (\nu) 
+{{u_h}_|}_{K_2} (\nu) 
+{{u_h}_|}_{K_3} (\nu) 
+{{u_h}_|}_{K_4} (\nu) \right)$ at the node $\nu$. Taking into account the possibility of quadrilaterals of different area, for a node $\nu \in \mathcal N_h$, we define by $\omega_\nu$ the union of all elements of $K \in \mathcal T$ sharing the vertex $\nu$.
The continuous piecewise linear averaging $\tilde u_h$ is the defined such that
    \begin{equation}
\tilde u_h(\nu)=
\sum\limits_{K \in \mathcal{T}, K\subset\omega_\nu}
 \frac{|K|}{|\omega_\nu|} {{u_h}_|}_K (\nu) 
    \end{equation}
holds for each regular node $\nu \in \mathcal N_h$.
The averaging error estimator is then defined by 
\begin{equation}
\eta_A^2 := \sum_{K\in\T} \eta_{A,K}^2,\qquad\text{with } \eta_{A,K}:=\| u_h - \tilde u_h \|_{L^2(K)},
\end{equation}
where the local contributions are used to refine the mesh using D\"orfler marking as described above.
\Cref{fig:AverageConvergencerates} shows the convergence rates for adaptively refined meshes using the averaging indicator for the test \cref{eq: Test case 3}.
The indicator behaves correctly and replicates the curve of the actual error. These curves have the same slope as the optimal rate $1/\sqrt{N}$ curve, with $N$ number of elements in the quad-tree mesh, also shown for comparison. 

In comparison to the hierarchical estimators, cf. \Cref{fig: TC3-H1 norm p-estimator} and \Cref{fig: TC3-H1 norm h-estimator}, the averaging error estimator follows the actual error curve more closely.

\begin{figure}\centering
	\includegraphics[width=.49\textwidth]{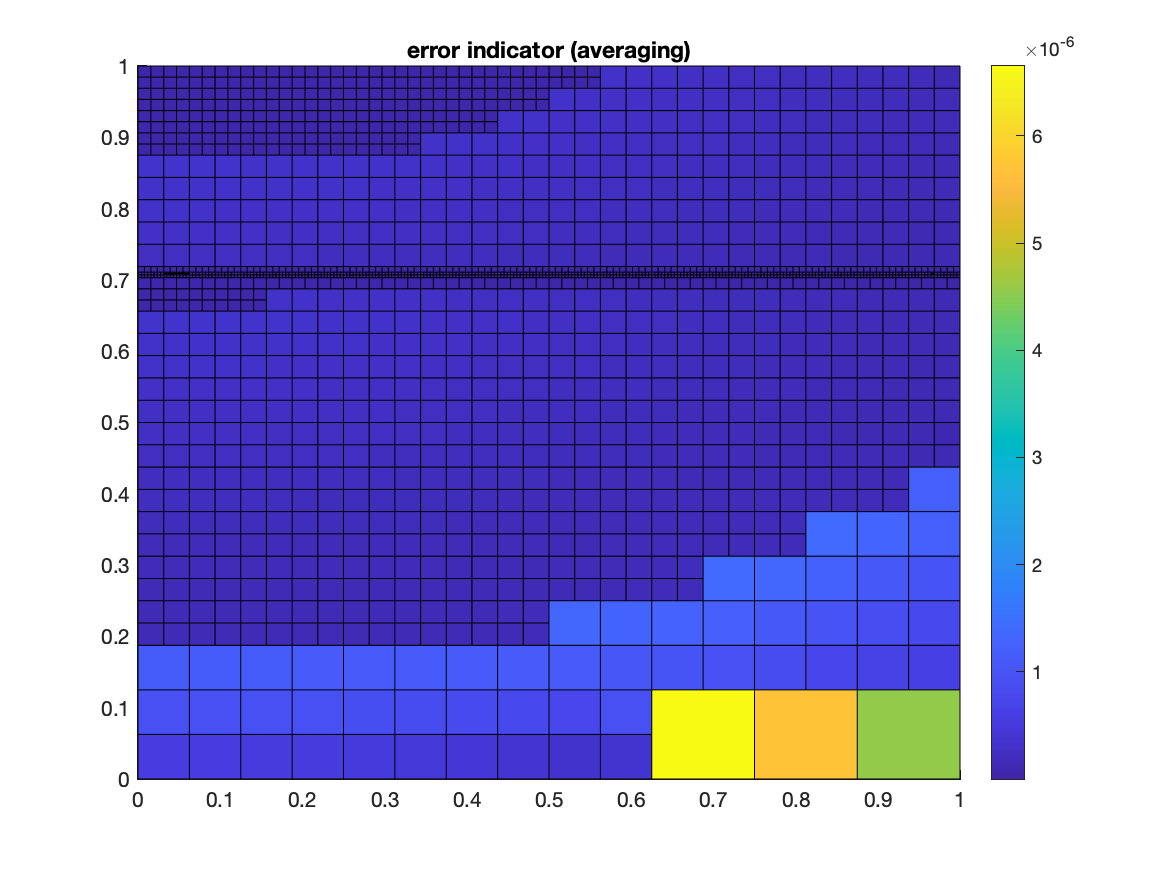}
	\includegraphics[width=.49\textwidth]{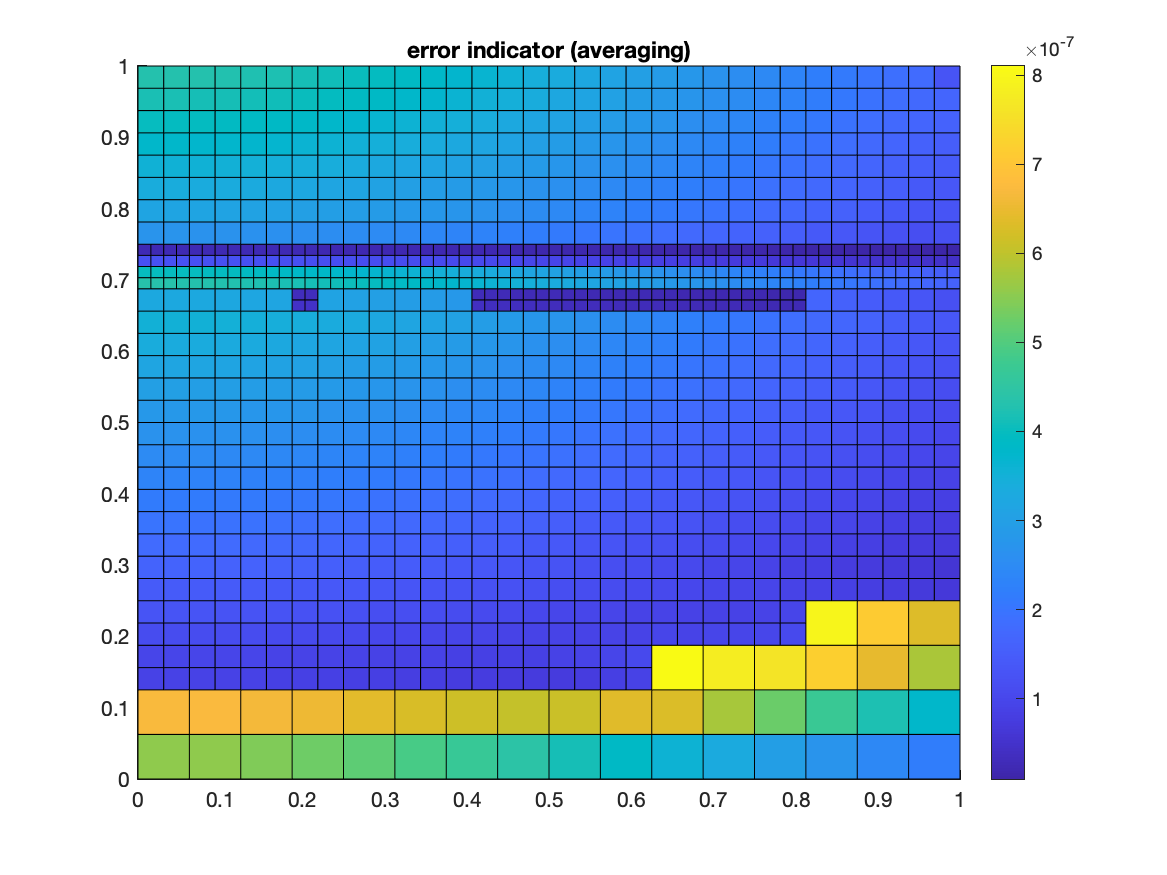}\\
	\includegraphics[width=.99\textwidth]{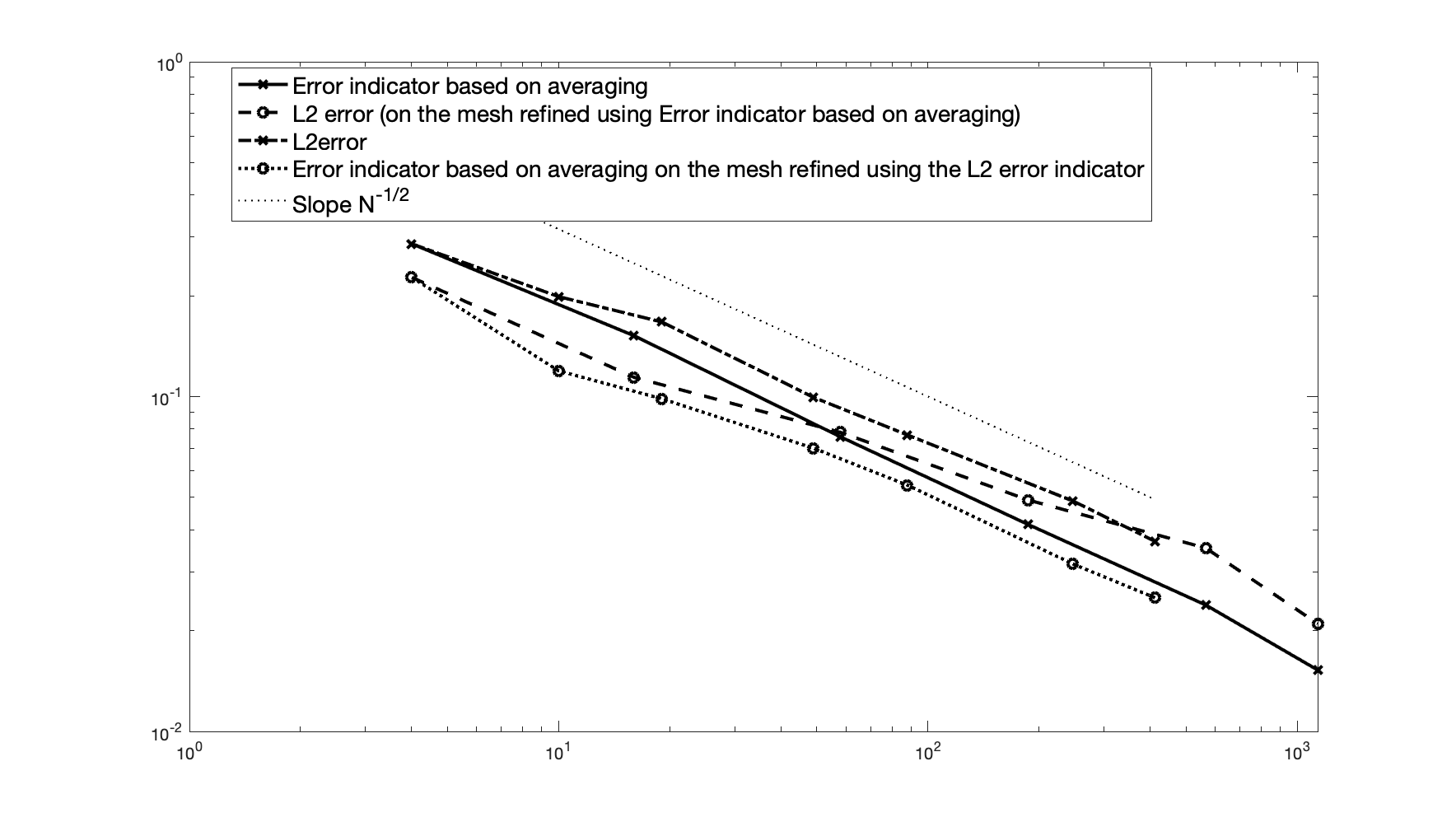}
	\caption{Non-smooth test case \cref{eq: Test case 3}. Top Locally refined mesh with the average error estimator after $6$ (left) and $9$ (right) refinements. Bottom: Convergence of the DG solution and the averaging error estimator on adaptively refined grids as well as the optimal rate $1/{\sqrt{N}}$ (light dotted line) in a double logarithmic scale. The dashed line with o shows the behavior of the $L^2$-error using the averaging error estimator for grid adaptation. The solid line with x shows the corresponding values of the averaging error estimator.
    For comparison, we show convergence of the $L^2$-error (dash-dotted with x), where the grid adaptation is based on the $L^2$-error itself. The dotted line with o shows the values of the corresponding averaging error estimator on that grid.\label{fig:AverageConvergencerates}}
\end{figure}
\section{Conclusions and discussion}\label{sec:conclusion}
We developed and analyzed a discontinuous Galerkin approximation for the radiative transfer equation in slab geometry. The use of quad-tree grids allowed for a relatively simple analysis with similar arguments as for more standard elliptic problems. While such grids allow for local mesh refinement in phase-space, the implementation of the numerical scheme is straightforward. For sufficiently regular solutions, we showed optimal rates of convergence. 

We showed by example that non-smooth solutions can be approximated well by adaptively refined grids.
The ability to easily adapt the computational mesh can also be useful when complicated geometries must be resolved, which may occur in higher-dimensional situations.
Also more general elements could be employed at the expense of a more complicated notation and analysis; we leave this to future research.
In order to automate the mesh adaptation procedure, an error estimator is required. 
We investigated numerically hierarchical error estimators and estimators based on local averaging in a post-processing step. All three estimators closely follow the actual error, and, in the case of point singularities, they can be used to obtain optimal convergence rates. We note that the hierarchical error estimators require to solve global problems, and it is left for future research to investigate whether a localization is possible. 
Upper bounds for the error can be derived for consistent approximations using duality theory \cite{Han_2014}. Rigorous a posteriori error estimation has also been done using discontinuous Petrov-Galerkin discretizations \cite{dahmen2020adaptive}. We leave it to future research to analyze the error estimators for the discontinuous Galerkin scheme considered here and to generalize the present method to a corresponding $h-p$ version, where the polynomial degrees can be varied independently over the elements.

While the solution of the linear systems for uniformly refined grids can be implemented using the established preconditioned iterative solvers \cite{AdamsLarsen02,palii2020convergent}, the structure of the linear systems for adaptively refined grids is more complex because the equations do not fully decouple in $\mu$; compare the situations in \Cref{fig:quadmesh}. One possible direction is to develop nested solvers, or to adapt the methodology of \cite{Pazner2022}. We leave this for future research.

Another direction of future research entails the regularity of the right-hand side $f$ in \cref{eq:ep1} and $g$ in \cref{eq:ep2}. If $f$ and $g$ define only an element in the dual space of $V$, see \cref{eq:ep_weak}, then the flux $\sigma_t^{-1}\dz u\notin V$ in general, and thus the flux may not have a trace. In this low regularity regime, the analysis of \Cref{sec:Sec4_numerical_examples_PS_DG} cannot be carried out. A possible remedy might be to use a lifting operator to replace the face integral by integrals over $\Omega$, see, e.g., \cite[p.~138]{diPietroErn} or \cite{Liu2022}.

\section*{Acknowledgements}
RB and MS acknowledge support by the Dutch Research Council (NWO) via grant OCENW.KLEIN.183.
http://dx.doi.org/10.13039/501100003246, "Nederlandse Organisatie voor
Wetenschappelijk Onderzoek". 

\bibliographystyle{plain}
\bibliography{sources_new}
\end{document}